 \newtheorem{thm}{Theorem}[section]
 \newtheorem{prop}[thm]{Proposition}
 \theoremstyle{definition}
 \newtheorem{defn}[thm]{Definition}
 \theoremstyle{remark}
 \numberwithin{equation}{section}
\begin{document}
%
%
%
%
%
%
%
\title[Linear stability of the $n$-gon relative equilibria of the $(1+n)$-body problem]
 {Linear stability of the $n$-gon relative equilibria of the $(1+n)$-body problem}
\author[XU]{Xingbo Xu $^{1,2,3}$ \\
\footnotesize 1. Purple Mountain Observatory, Chinese Academy of Sciences, Nanjing 210008, China\\
2. Graduate University of Chinese Academy of Sciences, Beijing 100039, China \\
3. IMCCE, CNRS-UMR 8028, Observatoire de Paris, 77 Avenue
Denfert-Rochereau, Paris 75014, France }

\address{No. 2,
West Beijing Road\\
210008, Nanjing\\
China}

\email{xbxu@pmo.ac.cn, }

\thanks{This work is supported by the National Natural Science Foundation of China under grant No.10833001. This paper has been published in Qual.Theory Dyn. Syst. (2013) 12: 255-271,
DOI 10.1007/s12346-012-0089-6, and is available at 
http://link.springer.com/article/10.1007/s12346-012-0089-6 }

\keywords{(1+n)-body, linear stability, relative equilibrium}

\date{\small \today}

\begin{abstract}
We consider the linear stabilities of the regular $n$-gon relative
equilibria of the $(1+n)$-body problem. It is shown that there exist
at most two kinds of infinitesimal bodies arranged alternatively at
the vertices of a regular $n$-gon when $n$ is even, and only one set
of identical infinitesimal bodies when $n$ is odd. In the case of
$n$ even, the regular $n$-gon relative equilibrium is shown to be
linearly stable when $n\geqslant 14$. In each case of $n=8,10$ and
$12$, linear stability can also be preserved if the ratio of two
kinds of masses belongs to an open interval. When $n$ is odd, the
related conclusion on the linear stability is recalled.
\end{abstract}

\maketitle
\section{Introduction}

The famous essay Maxwell(1859)\cite{Maxwell} discussed the stability of the rings of Saturn
by mathematical modeling. Under the hypothesis that the motion of the rings is uniform and should be stable,
he abandoned the possibilities that the rings are connected solid, or continuous liquid. He
concluded that the rings should be composed of countless discrete particles (\cite{Maxwell}, p.66)
after an important intermediate study. In that study, he conceived a very nearly circular and uniform
ring of satellites with equal masses, and found that the ring is linearly stable provided the ratio
of Saturn's mass to the total mass of $n$ satellites is greater than $0.4352 n^{2}$ (\cite{Maxwell}, p.25 or p.59).
This essay has aroused attentions of many aspects. One can also refer to a French monument
written by Tisserand\cite{Tisserand}.

Pendse (1935)\cite{Pendse} pointed out that Maxwell's ring model could not be applied to
the cases when $n$ is smaller than seven, because Maxwell assumed tacitly the center of primary to be
the center of masses. Scheeres \& Vinh (1991)\cite{Scheeres} accepted Pendse's argument, and
analyzed the characteristic equations very attentively. They pointed out a miscalculation of Pendse, and concluded that
the upper bound of the ratio of the total mass of $n$ satellites to the dominant mass for stability
can be represented by an asymptotic series of $n$ if $n\geq 7$, and is about $1/(0.4352 n^{2})$
when $n$ is large. Similar results are also achieved by
Roberts (2000)\cite{Roberts00} and Vanderbei \& Kolemen (2007)\cite{Vanderbei}.

Consider the planar $N$-body problem with $N=n+1$ in a uniformly rotating coordinate system with the origin at
the center of masses. Suppose one mass is large and the other $n$ masses are very small.
As the order of magnitude of the $n$ small masses tends to zero, the limiting case of a relative equilibrium of this problem,
is called a \emph{relative equilibrium} of the $(1+n)$-body problem, which was defined by G. R. Hall
in an unpublished paper \cite{Hall}. The relationship between the relative equilibria of
$(1+n)$-body problem and those of the $N$-body problem with $N=n+1$ has been revealed by Moeckel (1994)\cite{Moeckel}, and
will be mentioned in section 2.

If all the infinitesimal masses are equal, Casasayas et al. (1994)\cite{Casasaya}
showed that when $n> e^{73}$, the only stationary configuration is
Maxwell's ring configuration. When $n\leq 8$, other configurations exist, referring to Salo \& Yoder (1988)\cite{Salo}.
According to Salo \& Yoder's numerical quest, it seems that there is only one type of stationary configuration
when $n\geq 9$, and this conjecture is also supported by the numerical experiment of Carles Sim\'{o} (\cite{Cors04}, p.326).
When $2\leq n \leq 4$, the numerical results on the numbers of stationary configurations also concide
with analytical proofs, see Cors et al. (2004)\cite{Cors04}, Albouy \& Fu (2009)\cite{AlFu09}.
For the cases of $n=5,6,7,8,9$, they seem to be much more difficult to deal with,
as the case of $n=4$ had already been claimed by Hall to be difficult to handle (\cite{Hall}, p.12).

If the infinitesimal masses are arbitrarily given, Maxwell said that
\textquotedblleft we must calculate the disturbing forces due to any given displacement of the ring\textquotedblright
(\cite{Maxwell}, p.38), so it is difficult to determine the stability of a non-regular configuration.
Salo \& Yoder (1988) studied a special case of $n=3$ with a background of Saturn's coorbital satellites,
and Renner \& Sicardy (2004)\cite{Renner} studied the cases
of $n=3,4,5$. Recently, Corbera et al.(2011)\cite{Corbera11} researched into the case of $n=3$
and obtained two classes of new configurations generated by changing the infinitesimal masses. They also
calculated the number of configurations with a high precision, and found that the number varies from five to seven.

The new results gotten in this paper are especially about the existence and linear stability of
the regular $n$-gon relative equilibria in the $(1+n)$-body problem with two kinds of infinitesimal masses.
In section 2.1, we introduce Hall's potential function on the regular $n$-gon relative equilibria
of the $(1+n)$-body problem. Then the method for calculating eigenvalues of the Hessian matrix
of Hall's potential function is described in section 2.2 and 2.3.
In section 3, we show the existence of a set of positive infinitesimal masses
for the regular $n$-gon relative equilibria. In section 4, we show
the linear stabilities of regular $n$-gon relative equilibria with $n$ even.
The last section is the conclusion part, and we also give some vistas for future study.

\section{Regular $n$-gon relative equilibria}

There are two ways for the definition of a relative equilibrium
of the $(1+n)$-body problem. One way is to make a first approximation of the full system,
then write out the conditional equations for a relative equilibrium, for example, Salo, Yoder (1988).
The other way is to make a limiting case of the conditional equations for a
relative equilibrium of the full system, for example, Moeckel (1994).
We obtain the conditional equations for relative equilibria of the $(1+n)$-body problem by the second way.

\subsection{Relative equilibria}
Consider the Newtonian $N$-body problem with $N=n+1$ in a rotating coordinate frame
with the origin at the center of masses. Set the angular velocity of this frame as
a constant $\omega=1$. Suppose there is only one dominant mass and the other $n$ masses are very small.
Denote the dominant mass as $m_{0}$, and the other masses as $m_{i}$, $i=1,\cdots, n$.
Introduce the positions and conjugate momentums of the small masses as $x_{i}, y_{i}\in \mathbb{R}^{2}$,
$i=1,\cdots, n$, separately. The Hamiltonian function can be written,
 \begin{equation}\label{Ha0}
 \mathcal{H}=\sum_{i=0}^{n}\left[\frac{\|y_{i}\|^{2}}{2m_{i}}-x_{i}' J y_{i}\right]
-\sum_{i=0}^{n-1}\sum_{j=i+1}^{n}\frac{m_{i}m_{j}}{\|x_{j}-x_{i}\|} .
   \end{equation}
where
\begin{displaymath}
m_{0}=1, \quad m_{i}=\varepsilon \mu_{i}, \quad x_{0}=-\sum_{i=1}^{n}m_{i}x_{i}, \quad
y_{0}=-\sum_{i=1}^{n}y_{i},  \quad J=\left( \begin{array}{cc}
                                             0 & 1 \\
                                            -1 & 0
                                            \end{array}\right) .
\end{displaymath}
and the prime represents transposition.

Let's recall a theorem given by Moeckel (1994) in order to understand the relationship between
the relative equilibria of the $(1+n)$-body problem and those of the $N$-body problem with $N=n+1$.
\begin{thm}[Moeckel, 1994]\label{Moeckelthm}
 Let $x^{\varepsilon}$ be a family of relative equilibria of the $N$-body problem with $N=n+1$, $m_{0}=1$,
and $m_{i}=\varepsilon\mu_{i}$, $1\leq i \leq n$. Suppose that as $\varepsilon\rightarrow 0$, $x^{\varepsilon}$
converges to a non-degenerate relative equilibrium, $\bar{x}$, of the $(1+n)$-body problem. Then $x^{\varepsilon}$ is
non-degenerate for $\varepsilon$ sufficiently small. In this case, $x^{\varepsilon}$ is linearly stable
for $\varepsilon$ sufficiently small if and only if $\bar{x}$ is a local minimum of $\mathbf{V}$, which
is defined in \emph{(}\ref{VH}\emph{)} below.
\end{thm}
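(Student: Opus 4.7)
The plan is to pass to the rotating frame, identify the leading-order amended potential as $\varepsilon\mathbf{V}$, and then extract both non-degeneracy and linear stability from spectral information carried by the Hessian of $\mathbf{V}$ at $\bar x$. Concretely, I would pass to rotating coordinates with $\omega=1$ and rewrite the Hamiltonian \eqref{Ha0} so that relative equilibria are, modulo the trivial $\mathrm{SO}(2)$-orbit, critical points of an amended potential $U_{\omega}$ on the center-of-mass subspace. Substituting $m_{0}=1$, $m_{i}=\varepsilon\mu_{i}$, and the constraint $x_{0}=-\varepsilon\sum_{i}\mu_{i}x_{i}$ produces an expansion $U_{\omega}=\varepsilon\,\mathbf{V}(x_{1},\dots,x_{n})+\varepsilon^{2}R(x,\varepsilon)$ with $R$ smooth, where $\mathbf{V}$ is Hall's potential of the $(1+n)$-problem. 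After dividing through by $\varepsilon$, the relative-equilibrium equation becomes a smooth family $F(x,\varepsilon)=0$ with $F(x,0)=\nabla\mathbf{V}(x)$ and $D_{x}F(\bar x,0)=D^{2}\mathbf{V}(\bar x)$.

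Because $\bar x$ is a non-degenerate critical point of $\mathbf{V}$ on the reduced configuration space by hypothesis, $D^{2}\mathbf{V}(\bar x)$ is invertible there, so the implicit function theorem yields a unique smooth branch $x^{\varepsilon}\to\bar x$ of relative equilibria, and continuity of $D^{2}U_{\omega}$ transfers non-degeneracy to the whole branch for sufficiently small $\varepsilon>0$. This settles persistence and non-degeneracy.

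For linear stability I would rescale the conjugate momenta of the small bodies so that the Hamiltonian vector field has a finite, non-degenerate limit as $\varepsilon\to 0$. The linearization at $x^{\varepsilon}$ then takes the form $\dot\xi=JA(\varepsilon)\xi$; its trivial block is fixed and arises from conservation of linear and angular momentum together with the rotational/scaling symmetries of the relative equilibrium, while its non-trivial block is governed at leading order by $D^{2}\mathbf{V}(\bar x)$. A direct computation of the reduced characteristic polynomial shows that at $\varepsilon=0$ the non-trivial eigenvalues are $\pm i\sqrt{\lambda_{k}}$, with $\lambda_{k}$ ranging over the eigenvalues of the reduced Hessian of $\mathbf{V}$ at $\bar x$. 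Hence the linearization is elliptic in the limit exactly when all $\lambda_{k}>0$, i.e.\ when $\bar x$ is a strict local minimum of $\mathbf{V}$; a standard Hamiltonian perturbation argument, using that simple purely imaginary eigenvalues can leave the imaginary axis only through Krein collisions, propagates this conclusion from $\varepsilon=0$ to all sufficiently small $\varepsilon>0$.

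The technical heart, and the expected main obstacle, is this last step: cleanly peeling off the trivial part of the spectrum coming from the first integrals and symmetries of the $N$-body problem, and then tracking the surviving non-trivial eigenvalues under perturbation so that they stay on the imaginary axis when the $\lambda_{k}$ are positive and definitely leave it (in complex quartets) when some $\lambda_{k}$ is negative. Once this symplectic reduction is carried out, the clean equivalence ``linearly stable $\Leftrightarrow$ local minimum of $\mathbf{V}$'' follows from standard Hamiltonian perturbation theory.
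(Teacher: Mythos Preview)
The paper does not prove this theorem; it is quoted verbatim as a result of Moeckel (1994) and simply cited via reference~\cite{Moeckel}. There is therefore no ``paper's own proof'' to compare against, and your task here was really to reconstruct Moeckel's argument.

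Your outline is essentially Moeckel's strategy: expand the amended potential in $\varepsilon$, use the implicit function theorem off the non-degenerate Hessian of $\mathbf{V}$ to get persistence and non-degeneracy of the branch, and then analyse the linearized Hamiltonian system in the $\varepsilon\to 0$ limit. That is the right architecture. One point deserves tightening. The reduced linearization does \emph{not} have non-trivial eigenvalues of the clean form $\pm i\sqrt{\lambda_{k}}$: each infinitesimal body carries both a radial and an angular degree of freedom, and at $\varepsilon=0$ these couple so that the characteristic polynomial in each block is quartic in $\lambda$ with coefficients depending on the Hessian eigenvalue $\lambda_{k}$ of $\mathbf{V}_{\vartheta\vartheta}$, rather than simply $\lambda^{2}+\lambda_{k}$. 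The correct statement is that this quartic has only purely imaginary roots precisely when $\lambda_{k}\geq 0$, and acquires a root off the imaginary axis when $\lambda_{k}<0$; this is what yields the equivalence with $\bar x$ being a local minimum of $\mathbf{V}$. Your Krein-signature remark then handles the perturbation to small $\varepsilon>0$ as you say. So the plan is sound, but the explicit eigenvalue formula you wrote should be replaced by the actual quartic block computation.
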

The following proposition can be found in both Hall (preprint) and Moeckel (1994).
\begin{prop}
 For $\bar{x}$ as given in the theorem above, we have $\bar{x}_{0}=0$, $|\bar{x}_{i}|=1$, $i=1,2,\cdots,n$.
\end{prop}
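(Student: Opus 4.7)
The plan is to read off both assertions from the relative-equilibrium equations of the Newtonian $N$-body problem and then pass to the limit $\varepsilon\to 0$.

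First I would establish $\bar{x}_{0}=0$ directly from the centre-of-mass constraint recorded beneath (\ref{Ha0}): one has $x_{0}^{\varepsilon}=-\sum_{i=1}^{n}m_{i}x_{i}^{\varepsilon}=-\varepsilon\sum_{i=1}^{n}\mu_{i}x_{i}^{\varepsilon}$. Because the family $x^{\varepsilon}$ converges to $\bar{x}$, the positions $x_{i}^{\varepsilon}$ stay bounded, so $x_{0}^{\varepsilon}=O(\varepsilon)$ and $\bar{x}_{0}=0$.

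Next I would write the stationarity conditions in the rotating frame with $\omega=1$. Imposing $\dot{x}_{i}=\ddot{x}_{i}=0$ on the Hamiltonian system associated with (\ref{Ha0}) --- equivalently, setting the gradient of the amended potential to zero --- gives, for every body,
\begin{equation*}
x_{i}=\sum_{j\neq i}m_{j}\,\frac{x_{i}-x_{j}}{\|x_{i}-x_{j}\|^{3}}.
\end{equation*}
For $i\geq 1$, separating the contribution of the primary ($m_{0}=1$) from those of the other small bodies ($m_{j}=\varepsilon\mu_{j}$, $j\geq 1$) yields
\begin{equation*}
x_{i}=\frac{x_{i}-x_{0}}{\|x_{i}-x_{0}\|^{3}}+\varepsilon\sum_{j=1,\,j\neq i}^{n}\mu_{j}\,\frac{x_{i}-x_{j}}{\|x_{i}-x_{j}\|^{3}}.
\end{equation*}
Letting $\varepsilon\to 0$ and inserting $\bar{x}_{0}=0$ from the first step, the $O(\varepsilon)$ sum drops out and the limit equation reduces to $\bar{x}_{i}=\bar{x}_{i}/|\bar{x}_{i}|^{3}$, which forces $|\bar{x}_{i}|=1$ for every $i=1,\ldots,n$.

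The only point requiring a word of justification is the passage to the limit inside the Newtonian terms: one must rule out collisions along the family. This is already implicit in the convergence hypothesis, since $\bar{x}$ is itself a relative equilibrium of the $(1+n)$-body problem and hence collisionless; therefore the mutual distances $\|x_{i}^{\varepsilon}-x_{j}^{\varepsilon}\|$ are bounded below uniformly for small $\varepsilon$, and each summand of the $\varepsilon$-weighted sum stays bounded. No genuine obstacle arises; the proposition amounts to a short limiting calculation, which is presumably why both Hall and Moeckel record it without proof.
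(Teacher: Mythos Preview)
Your argument is correct. The paper itself does not supply a proof of this proposition; it merely states it and attributes it to Hall (preprint) and Moeckel (1994). Your derivation --- using the centre-of-mass relation $x_{0}^{\varepsilon}=-\varepsilon\sum_{i}\mu_{i}x_{i}^{\varepsilon}$ to force $\bar{x}_{0}=0$, then passing to the limit in the relative-equilibrium equations (\ref{RE1})--(\ref{RE2}) to obtain $\bar{x}_{i}=\bar{x}_{i}/|\bar{x}_{i}|^{3}$ --- is exactly the standard limiting computation one finds in those references, and your remark that no collisions occur in the limit is the right justification for the bounded-denominator step. There is nothing further to compare.
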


The conditional equations for a relative equilibrium of Hamiltonian system (\ref{Ha0}) can be written,
\begin{align} \label{RE1}
\dot{x}_{i}&=\frac{\partial \mathcal{H}}{\partial y_{i}}=\frac{y_{i}}{m_{i}}+Jx_{i}=0 ,
 \\
\dot{y}_{i}&=-\frac{\partial \mathcal{H}}{\partial x_{i}}=J y_{i}+ \frac{m_{i}m_{0}(x_{0}-x_{i})}{\|x_{0}-x_{i}\|^{3}}+
\sum_{k=1, \neq i}^{n}\frac{m_{i}m_{k}(x_{k}-x_{i})}{\|x_{k}-x_{i}\|^{3}}=0 \label{RE2} ,
\end{align}
where $i=1,2,\cdots, n$.

One gets $Jy_{i}=m_{i}x_{i}$ from (\ref{RE1}), and substitutes it into (\ref{RE2}).
Let $x_{i}=r_{i}$ $\left(\cos\vartheta_{i}, \sin \vartheta_{i}\right)'$,
(\ref{RE2}) make inner product with $(-\sin\vartheta_{i},\cos\vartheta_{i})'$,
and $\varepsilon\rightarrow 0$.
Then the conditional equations for a relative equilibrium of $(1+n)$-body problem can be obtained,
\begin{eqnarray}\label{ki1}
\sum_{k=1, k\neq i}^{n}\mu_{k}\sin(\vartheta_{k}-\vartheta_{i})\left( \frac{1}{r_{k,i}^{3}}-1 \right)=0 ,
\end{eqnarray}
where $r_{k,i}=2\sin\left( \frac{|\vartheta_{k}-\vartheta_{i}|}{2}\right)$, $ k\neq i$.

Integrate the left hand side of equation (\ref{ki1}) of $\vartheta_{i}$, yields
\begin{eqnarray}\label{VH}
 \mathbf{V} =\sum_{i=1}^{n-1}\sum_{k=i+1}^{n}\mu_{i}\mu_{k}\left(\frac{1}{r_{k,i}}
+\frac{r_{k,i}^{2}}{2}\right) ,
\end{eqnarray}
and function $\mathbf{V}$ is called \emph{Hall's potential function}.

Equations (\ref{ki1}) refers to a function below,
\begin{equation}\label{Ftheta}
\mathcal{F}(\varphi)=\sin \varphi \left( 1-\frac{1}{8|\sin^{3}\varphi/2|} \right),
\quad \varphi \in \{\varphi|\varphi\in \mathbb{R}, \varphi\neq 2k\pi,k\in \mathbb{Z} \} .
\end{equation}
The first order derivative of $\mathcal{F}(\varphi)$ is
 \begin{eqnarray}\label{fx}
\mathbf{f}(\varphi)=\mathcal{F}'(\varphi)
=\frac{1}{8|\sin \frac{\varphi}{2}|}\left( \frac{2}{\sin^{2}\frac{\varphi}{2}}-1\right)+\cos \varphi .
\end{eqnarray}
Note that functions $\mathbf{f}(\varphi)$ and $\mathcal{F}(\varphi)$ are useful for later analysis,
so let's recall a proposition, which can be found
in \cite{Hall},\cite{Casasaya},\cite{Cors04},\cite{AlFu09}.
\begin{prop}
Functions $\mathcal{F}(\varphi)$ and $\mathbf{f}(\varphi)$, with the field of definition
 $\{\varphi| \varphi\in \mathbb{R}, \varphi\neq 2k\pi,k\in \mathbb{Z} \}$, satisfy
 \begin{enumerate}
  \item $\mathcal{F}(-\varphi)=-\mathcal{F}(\varphi)$, $\quad \mathcal{F}(\varphi\pm2\pi)=\mathcal{F}(\varphi)$,
$\quad \mathcal{F}(\pi-\varphi)=-\mathcal{F}(\varphi-\pi)$,
  \item $\mathbf{f}(-\varphi)=\mathbf{f}(\varphi)$, $\quad \mathbf{f}(\pi-\varphi)=\mathbf{f}(\varphi-\pi)$,
$\quad \mathbf{f}(\varphi) \geq  \mathbf{f}(\pi)=-\frac{7}{8} $,
  \item $\exists \, c_{1}>0$, \text{such that} $\mathbf{f}(\varphi)>c_{1}/\varphi^{3}-1 $, \text{for} $\varphi\in(0,\pi)$,
  \item $\mathbf{f}''(\varphi)=\mathcal{F}'''(\varphi) > 0$, $\quad \forall \varphi\in(0,2\pi)$.
 \end{enumerate}
\end{prop}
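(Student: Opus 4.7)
My plan is to address the four parts essentially in order, though I would handle the convexity of part~(4) before completing part~(2), since convexity drives the minimum assertion there. \textbf{For the symmetry identities of (1) and (2)}, I would substitute directly into $\mathcal{F}(\varphi)=\sin\varphi\bigl(1-(8|\sin(\varphi/2)|^{3})^{-1}\bigr)$. Oddness and $2\pi$-periodicity of $\mathcal{F}$ follow from $\sin(-\varphi)=-\sin\varphi$, $|\sin(-\varphi/2)|=|\sin(\varphi/2)|$, and $|\sin(\varphi/2+\pi)|=|\sin(\varphi/2)|$. Since $\pi-\varphi=-(\varphi-\pi)$, the third identity $\mathcal{F}(\pi-\varphi)=-\mathcal{F}(\varphi-\pi)$ is just oddness evaluated at $\varphi-\pi$. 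Differentiating the three identities for $\mathcal{F}$ gives the corresponding identities for $\mathbf{f}=\mathcal{F}'$ in part~(2); in particular evenness of $\mathbf{f}$ is the derivative of oddness of $\mathcal{F}$. Substituting $\sin(\pi/2)=1$, $\cos\pi=-1$ into (\ref{fx}) yields $\mathbf{f}(\pi)=-7/8$.

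\textbf{For the global lower bound $\mathbf{f}\ge -7/8$,} granting part~(4) I would combine evenness of $\mathbf{f}$ with $2\pi$-periodicity to obtain $\mathbf{f}(\pi+\psi)=\mathbf{f}(\pi-\psi)$, so that $\mathbf{f}$ is symmetric about $\pi$ on $(0,2\pi)$. A strictly convex function possessing this symmetry attains its unique minimum at the midpoint $\pi$, where the value is $-7/8$; periodicity extends the bound to the full domain.

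\textbf{For the cubic lower bound (3),} on $(0,2\pi)$ the absolute value may be dropped; using $\sin\varphi=2\sin(\varphi/2)\cos(\varphi/2)$ I would rewrite $\mathbf{f}(\varphi)+1=2\cos^{2}(\varphi/2)+\tfrac{1}{4\sin^{3}(\varphi/2)}-\tfrac{1}{8\sin(\varphi/2)}$. The elementary estimate $\sin(\varphi/2)\le\varphi/2$ then gives $1/(4\sin^{3}(\varphi/2))\ge 2/\varphi^{3}$, and a short Taylor expansion at $0$ shows $(\mathbf{f}(\varphi)+1)\varphi^{3}\to 2$ as $\varphi\to 0^{+}$. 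By the already-established bound $\mathbf{f}+1\ge 1/8>0$ on $(0,\pi]$, the continuous function $(\mathbf{f}(\varphi)+1)\varphi^{3}$ has a strictly positive infimum there; any $c_{1}$ smaller than this infimum yields the claim.

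\textbf{For the convexity (4), the main obstacle,} let $s=\sin(\varphi/2)$ and $c=\cos(\varphi/2)$, so that on $(0,2\pi)$ one has $\mathbf{f}(\varphi)=\cos\varphi+1/(4s^{3})-1/(8s)$. Differentiating twice and simplifying using $s^{2}+c^{2}=1$ yields
\begin{equation*}
\mathbf{f}''(\varphi)=(2s^{2}-1)+\frac{3}{16s^{3}}+\frac{3c^{2}}{4s^{5}}-\frac{1}{32s}-\frac{c^{2}}{16s^{3}}.
\end{equation*}
Multiplying through by the positive factor $32s^{5}$ reduces positivity of $\mathbf{f}''$ to the single polynomial inequality
\begin{equation*}
P(s):=64s^{7}-32s^{5}+s^{4}-20s^{2}+24>0,\qquad s\in(0,1].
\end{equation*}
The quadratic $s^{4}-20s^{2}+24$ has its smaller root at $s^{2}=10-\sqrt{76}\approx 1.28$, hence is bounded below by $5$ on $[0,1]$. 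For $s^{2}\ge 1/2$ the remainder $32s^{5}(2s^{2}-1)$ is nonnegative, so $P\ge 5$; for $s^{2}<1/2$ I would estimate $|32s^{5}(2s^{2}-1)|\le 32(1/2)^{5/2}=4\sqrt{2}$ against the sharper bound $s^{4}-20s^{2}+24\ge 14$ valid on $[0,1/2]$. The care needed here is what makes this the principal step: the negative contribution $-\cos\varphi$ changes sign at $\varphi=\pi/2,3\pi/2$ and must be controlled against the singular positive terms coming from $1/s^{3}$ and $1/s^{5}$.
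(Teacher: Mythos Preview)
Your proof is correct and complete. The paper itself does not prove this proposition: it merely recalls the statement from the literature (citing Hall, Casasayas et al., Cors et al., and Albouy--Fu) and illustrates the functions with Figure~\ref{F1F2}. So in effect you have supplied what the paper outsources.

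A couple of minor remarks on your argument. In part~(3) there is a slight circularity of presentation: you invoke the bound $\mathbf{f}+1\ge 1/8$ from part~(2), which in turn rests on part~(4); since you explicitly state that you prove (4) before finishing (2), this is logically fine, but it is worth making the dependency explicit when you write it out. In part~(4) your polynomial reduction is correct (I checked the coefficients of $P(s)=64s^{7}-32s^{5}+s^{4}-20s^{2}+24$), and your case split at $s^{2}=1/2$ works: on $s^{2}\le 1/2$ one actually has $s^{4}-20s^{2}+24\ge 14.25$, comfortably beating $4\sqrt{2}\approx 5.66$. The argument is clean and elementary, and it is nice that you avoid any numerical root-finding.
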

These properties of the two functions are shown in the figure \ref{F1F2}.
\begin{figure}[b]
 \centering
 \includegraphics[width=70mm,bb=0 -0.250000000 525.250000 375.750000]{erhan.eps}
   \caption{ $\mathtt{F}(\varphi)$ represents function (\ref{Ftheta}) and $\mathtt{f}(\varphi)$
represents function (\ref{fx})
}
\label{F1F2}
\end{figure}

\subsection{Circulant matrix}
The linear stabilities of the relative equilibria of the $(1+n)$-body problem depend on
the eigenvalues of the Hessian matrix $\mathbf{V}_{\vartheta\vartheta}$.
When all the infinitesimal masses are equal, the matrix is a circulant matrix,
and Moeckel (1994) concluded that such regular $n$-gon relative equilibria
are linearly stable if and only if $n\geq 7$.
The definition of the circulant matrix is given below.
\begin{defn}
Let $j$ be a positive integer. Matrix
$\mathbf{A}$ of $j\times j$ is a \emph{circulant matrix}, if it satisfies
\begin{displaymath}
 A_{i,k}=A_{\{i+l\}_{j},\{k+l\}_{j}}, \qquad i, k, l\in \mathbb{N},
\end{displaymath}
where $A_{i,k}$ is an element of the matrix and $\{i+l\}_{j}$ represents $i+l$ modulo $j$.
\end{defn}

In a regular $n$-gon relative equilibrium, suppose one infinitesimal body lies
on the positive $x$ axis, and count them from this one by the anti clockwise direction.
The angle positions of the infinitesimal bodies are
\begin{equation}\label{angles}
\vartheta_{i}=\frac{2(i-1)\pi}{n}, \quad i=1,2,\cdots,n .
\end{equation}

Let's review the following proposition \ref{circu} about the circulant matrix, and this propostion
can be found in \cite{Gray}.

\begin{prop}\label{circu}
 Every circulant matrix $\mathbf{A}$ with rank $j$ has eigenvectors
\begin{equation*}
 \xi^{(l)}=\frac{1}{\sqrt{j}}\left(1,e^{-2\pi \mathrm{i}(l-1)/j},\cdots,e^{-2\pi \mathrm{i}(l-1)(j-1)/j}\right)',
\quad l=1,2,\cdots, j ,
\end{equation*}
where $\mathrm{i}=\sqrt{-1}$.

Denote the element in the $i$-th row and $k$-th column of matrix $\mathbf{A}$ as $A_{i,k}$.
The eigenvalues are,
\begin{equation*}\label{alpha}
 \alpha_{l}=\sum_{k=1}^{j}A_{1,k}e^{-2\pi \mathrm{i}(l-1)(k-1)/j}, \quad l=1,2,\cdots, j .
\end{equation*}
One has $\mathbf{A}=\mathbf{U}\mathbf{diag}\{\alpha_{l}\}\mathbf{U}^{-1}$,
where $\mathbf{U}$ has the eigenvectors as columns in order, and
$\mathbf{diag}\{\alpha_{l}\}$ is a diagonal matrix
of eigenvalues of matrix $\mathbf{A}$.

 Let $\mathbf{B}$ be another circulant $j\times j$ matrix, and denote its element in the $i$-th row and
$k$-th column as $B_{i,k}$. Its eigenvalues are
\begin{equation*}
 \beta_{l}=\sum_{k=1}^{j}B_{1,k}e^{-2\pi \mathrm{i}(l-1)(k-1)/j}, \quad l=1,2,\cdots, j ,
\end{equation*}
respectively. Then
\begin{enumerate}
 \item $\mathbf{A}$ and $\mathbf{B}$ commute and
 $ \mathbf{AB}=\mathbf{BA}=\mathbf{U}\mathbf{diag}\{\alpha_{l}\beta_{l}\}\mathbf{U}^{-1} $,
$\mathbf{AB}$ is also a circulant matrix.
\item $\mathbf{A}+\mathbf{B}$ is a circulant matrix and
 $  \mathbf{A}+\mathbf{B}=\mathbf{U}\mathbf{diag}\{\alpha_{l}+\beta_{l}\}\mathbf{U}^{-1} $.
\item  If $\alpha_{l}\neq 0$, $l=1,2,\cdots,j$, then $\mathbf{A}$ is nonsingular and
\begin{displaymath}
 \mathbf{A}^{-1}=\mathbf{U}\mathbf{diag}\{\alpha_{l}^{-1}\}\mathbf{U}^{-1}.
\end{displaymath}
\end{enumerate}
\end{prop}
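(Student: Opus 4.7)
The plan is to reduce the whole statement to the spectral theory of the single cyclic shift permutation matrix $\mathbf{S}\in\mathbb{R}^{j\times j}$ defined by $S_{i,k}=1$ when $k\equiv i+1\pmod{j}$ and $0$ otherwise. First I would observe that the defining invariance $A_{i,k}=A_{\{i+l\}_{j},\{k+l\}_{j}}$ of a circulant matrix is exactly equivalent to $\mathbf{A}$ commuting with $\mathbf{S}$, and a direct comparison of entries yields the polynomial representation $\mathbf{A}=\sum_{k=1}^{j}A_{1,k}\mathbf{S}^{k-1}$. Since each $\mathbf{S}^{m}$ is itself circulant and $\mathbf{S}^{j}=I$, it follows at once that sums and products of circulant matrices are polynomials in $\mathbf{S}$ and hence again circulant; this takes care of the closure claims in items (1) and (2).

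Next I would verify the eigen-equation $\mathbf{S}\xi^{(l)}=e^{-2\pi\mathrm{i}(l-1)/j}\,\xi^{(l)}$ by a one-line entrywise computation, so that each $\xi^{(l)}$ is a common eigenvector of every power $\mathbf{S}^{m}$, and therefore of every circulant matrix, with eigenvalue $\alpha_{l}=\sum_{k=1}^{j}A_{1,k}\bigl(e^{-2\pi\mathrm{i}(l-1)/j}\bigr)^{k-1}$, matching the stated formula. A short orthonormality check based on the geometric series identity $\sum_{m=0}^{j-1}e^{2\pi\mathrm{i}m(l-l')/j}=j\,\delta_{l,l'}$ shows that $\mathbf{U}$ is unitary, which in particular gives its invertibility together with the diagonalization $\mathbf{A}=\mathbf{U}\,\mathbf{diag}\{\alpha_{l}\}\,\mathbf{U}^{-1}$.

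With the spectral picture in place, the remaining items become routine diagonal algebra. Any other circulant $\mathbf{B}$ is diagonalized by the \emph{same} $\mathbf{U}$ with eigenvalues $\beta_{l}$, so the identity $\mathbf{diag}\{\alpha_{l}\}\,\mathbf{diag}\{\beta_{l}\}=\mathbf{diag}\{\alpha_{l}\beta_{l}\}=\mathbf{diag}\{\beta_{l}\}\,\mathbf{diag}\{\alpha_{l}\}$ yields both commutativity $\mathbf{AB}=\mathbf{BA}$ and the product formula of item (1); conjugating $\mathbf{diag}\{\alpha_{l}+\beta_{l}\}$ by $\mathbf{U}$ gives item (2); and when every $\alpha_{l}\neq 0$ the diagonal $\mathbf{diag}\{\alpha_{l}^{-1}\}$ is well defined and conjugating back produces the inverse formula of item (3).

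There is no substantive obstacle here; this is essentially a verification. The only place one has to be careful is the bookkeeping between the modular indexing $\{i+l\}_{j}$ used in the definition and the $1$-to-$j$ indexing used in the formulas for $\alpha_{l}$ and $\xi^{(l)}$, particularly when reading off the eigenvalue formula from $\mathbf{A}=\sum A_{1,k}\mathbf{S}^{k-1}$. Once this is done consistently, every claim reduces to elementary linear algebra on diagonal matrices.
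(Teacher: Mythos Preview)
Your argument is correct and is precisely the standard textbook proof: represent any circulant matrix as a polynomial in the cyclic shift $\mathbf{S}$, diagonalize $\mathbf{S}$ explicitly via the discrete Fourier vectors $\xi^{(l)}$, and read off all the claimed properties from diagonal algebra. The bookkeeping caveat you flag about the $\{i+l\}_{j}$ versus $1,\ldots,j$ indexing is the only place to be careful, and you have it right.

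As for comparison: the paper does not supply its own proof of this proposition. It is stated as a review result and simply referred to Gray's monograph on Toeplitz and circulant matrices \cite{Gray}. Your shift-matrix argument is essentially the same one found there, so there is no substantive difference in approach to discuss.
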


However, if not all the infinitesimal masses are identical, $\mathbf{V}_{\vartheta\vartheta}$
will no longer be a circulant matrix.

\subsection{Eigenvalues of a block circulant matrix }

Define a block circulant matrix $\mathbf{S}$ as below,
\begin{equation} \label{block1}
\mathbf{S}=\left(\begin{array}{cc}
      \mathbf{A} & \mathbf{C} \\
      \mathbf{C}^{T} & \mathbf{B} \\
      \end{array}
\right) ,
\end{equation}
where $\mathbf{A}$, $\mathbf{B}$ and $\mathbf{C}$ are all circulant matrices of $j \times j$.
The upper $T$ represents transposition.

Denote $\mathbf{I}_{2j}$ as a $2j\times 2j$ unit matrix,
and $\mathbf{I}$ as a $j\times j$ unit matrix.
The characteristic polynomial of matrix $\mathbf{S}$ can be calculated
by applying some elementary transformations.
\begin{align}\label{tran1}
 \left|  \lambda\mathbf{I}_{2j}-\mathbf{S}\right|
&=\left|\begin{array}{cc}
       \lambda\mathbf{I}-\mathbf{A} & -\mathbf{C} \\
      -\mathbf{C}^{T} & \lambda\mathbf{I}-\mathbf{B} \\
       \end{array}\right|   \nonumber\\
&=\left|\left(\begin{array}{cc}
       \mathbf{I} & \mathbf{0} \\
-\mathbf{C}^{T}(\lambda\mathbf{I}-\mathbf{A})^{-1} & \mathbf{I} \\
       \end{array}\right)
\left(\begin{array}{cc}
      \lambda\mathbf{I}-\mathbf{A} & -\mathbf{C} \\
      \mathbf{0} & (\lambda\mathbf{I}-\mathbf{B})-\mathbf{C}^{T}(\lambda\mathbf{I}-\mathbf{A})^{-1}\mathbf{C} \\
       \end{array}\right)\right|     \nonumber \\
&=\left|\left(\begin{array}{cc}
      \lambda\mathbf{I}-\mathbf{A} & \mathbf{0} \\
\mathbf{0} & (\lambda\mathbf{I}-\mathbf{B})-\mathbf{C}^{T}(\lambda\mathbf{I}-\mathbf{A})^{-1}\mathbf{C} \\
   \end{array}\right)
\left(\begin{array}{cc}
     \mathbf{I} & -(\lambda\mathbf{I}-\mathbf{A})^{-1}\mathbf{C}  \\
 \mathbf{0} & \mathbf{I} \\
 \end{array}\right)\right|   \nonumber \\
&=\left|(\lambda\mathbf{I}-\mathbf{A})\left[(\lambda\mathbf{I}-\mathbf{B})-\mathbf{C}^{T}(\lambda\mathbf{I}
-\mathbf{A})^{-1}\mathbf{C}\right]\right|    \nonumber  \\
&=\left|\lambda^{2}\mathbf{I}-(\mathbf{A}+\mathbf{B})\lambda+
\left[\mathbf{A}\mathbf{B}-(\lambda\mathbf{I}-\mathbf{A})\mathbf{C}^{T}(\lambda\mathbf{I}-\mathbf{A})^{-1}\mathbf{C}
\right]\right|  .
\end{align}

The eigenvalues of matrix $\mathbf{C}$ are
\begin{equation*}
\gamma_{l}=\sum_{k=1}^{j}C_{1,k}e^{-2\pi \mathrm{i}(l-1)(k-1)/j}, \quad l=1,2,\cdots, j ,
\end{equation*}
where $C_{1,k}$ is the element in the first row and the $k$-th column of matrix $\mathbf{C}$.
The following equation can be obtained by Proposition \ref{circu},
\begin{equation*}
 \mathbf{U}^{-1}(\lambda\mathbf{I}-\mathbf{A})\mathbf{C}^{T}(\lambda\mathbf{I}-\mathbf{A})^{-1}
\mathbf{C}\mathbf{U}=\mathbf{diag}\{\bar{\gamma}_{l}\gamma_{l}\} ,
\end{equation*}
where $\bar{\gamma}$ represents the eigenvalues of matrix $\mathbf{C}^{T}$, and equals to the conjugate of $\gamma$.
So, characteristic equations can be obtained as below,
\begin{equation}\label{lambda}
 \lambda_{l}^{2}-(\alpha_{l}+\beta_{l})\lambda_{l}+\alpha_{l}\beta_{l}-\bar{\gamma}_{l}\gamma_{l}=0 ,
\quad 1 \leq l \leq j .
\end{equation}
One can also find that
\begin{align*}
\alpha_{l}=\alpha_{j+2-l}, \quad
\beta_{l} =\beta_{j+2-l}, \quad
\gamma_{l}\bar{\gamma}_{l}=\gamma_{j+2-l}\bar{\gamma}_{j+2-l} .
\end{align*}

If $\alpha_{l}$, $\beta_{l}$ and $\bar{\gamma}_{l}\gamma_{l}$ are calculated for $1\leq l \leq j$,
then the eigenvalues of the block circulant matrix $\mathbf{S}$
can be calculated by solving equations (\ref{lambda}).

\section{The set of masses for regular $n$-gon relative equilibria}

Consider a regular $n$-gon relative equilibrium.
There is only one set of identical infinitesimal masses for a regular $n$-gon
relative equilibrium when $n$ is odd, and there exist two kinds of infinitesimal bodies
arranged alternatively at the vertices of the regular $n$-gon when $n$ is even.
These can be shown to be true
by referring to a proposition given by Renner \& Sicardy (p.403) and the method in section 2.3.

Let's review that proposition in this paragraph.
With the help of equations (\ref{ki1}) and (\ref{Ftheta}), one can write conditional equations
for the relative equilibria of the $(1+n)$-body problem in the following way,
\begin{equation}\label{Fabs}
 \sum_{k=1,\neq i}^{n}\mu_{k}\mathcal{F}(\vartheta_{k}-\vartheta_{i})=0,
\quad i=1,2,\cdots,n .
\end{equation}
Suppose that angles $(\vartheta_{1}$, $\cdots$, $\vartheta_{n})$ correspond to a relative equilibrium.
Define $\mathcal{F}_{ik}\equiv \mathcal{F}(\vartheta_{k}-\vartheta_{i})$ with $k\neq i$,
$\mathcal{F}_{kk}=0$, and note that function $\mathcal{F}$ is odd. One can change
equations (\ref{Fabs}) into a matrix form,
\begin{equation}\label{matrixF}
 \left(\begin{array}{ccccc}
  0                & \mathcal{F}_{12} & \mathcal{F}_{13}   & \cdots & \mathcal{F}_{1n}     \\
 -\mathcal{F}_{12} & 0                & \mathcal{F}_{23}   & \cdots & \mathcal{F}_{2n}     \\
 \vdots            & \vdots           & \ddots             & \ddots & \vdots      \\
 -\mathcal{F}_{1n} & -\mathcal{F}_{2n} & -\mathcal{F}_{3n} & \cdots & 0
       \end{array} \right)
\left(\begin{array}{c}
       \mu_{1} \\     \mu_{2}  \\   \vdots \\   \mu_{n}
      \end{array} \right)
\equiv \mathbf{M}_{n}
\left(\begin{array}{c}
       \mu_{1} \\     \mu_{2}  \\   \vdots \\   \mu_{n}
      \end{array} \right)= \mathbf{0}_{n\times 1} .
\end{equation}
If $n$ is odd, and \textrm{rank}($\mathbf{M}_{n}$) is $n-\kappa$, where $\kappa$ is an odd integer,
then there exists a $\kappa$-parameter family of mass vectors $(\mu_{1}$, $\cdots$, $\mu_{n})$.
If $n$ is even, then \textrm{rank}($\mathbf{M}_{n}$) is generally $n$, and there is generally no family of mass vectors
for a relative equilibrium. However, Renner \& Sicardy just mentioned examples of when $n=2,3$
in their remarks.

Substitute the angle positions given in (\ref{angles})
into (\ref{matrixF}).
For a regular $n$-gon relative equilibrium with $n\geq 3$, we will show that,
\begin{prop}\label{pair}
When $n$ is odd, \emph{rank}\emph{(}$\mathbf{M}_{n}$\emph{)}$=n-1$, then there exists only one parameter
for the set of infinitesimal masses.
When $n$ is even, \emph{rank}\emph{(}$\mathbf{M}_{n}$\emph{)}$=n-2$,
then there exists a two-parameter family of mass vectors.
In addition, the two kinds of masses are alternatively arranged at the vertices.
\end{prop}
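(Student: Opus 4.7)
The plan is to exploit the circulant structure of $\mathbf{M}_n$. Because $\vartheta_k - \vartheta_i = 2(k-i)\pi/n$ and $\mathcal{F}$ is $2\pi$-periodic, the entry $(\mathbf{M}_n)_{i,k} = \mathcal{F}(2(k-i)\pi/n)$ depends only on $(k-i) \bmod n$, so $\mathbf{M}_n$ is circulant, and the oddness of $\mathcal{F}$ makes it skew-symmetric. Proposition \ref{circu} then furnishes all eigenvalues as
\begin{equation*}
\lambda_l = \sum_{m=1}^{n-1} \mathcal{F}(2\pi m/n)\, e^{-2\pi \mathrm{i}(l-1)m/n}, \qquad l = 1, \ldots, n.
\end{equation*}
Pairing the $m$-th and $(n-m)$-th terms using $\mathcal{F}(2\pi(n-m)/n) = -\mathcal{F}(2\pi m/n)$ (the middle term $m = n/2$, present only when $n$ is even, vanishes since $\mathcal{F}(\pi)=0$) collapses $\lambda_l$ to the purely imaginary sine sum
\begin{equation*}
\lambda_l = -2\mathrm{i}\sum_{m=1}^{\lfloor (n-1)/2 \rfloor} \mathcal{F}(2\pi m/n)\,\sin(2\pi(l-1)m/n).
\end{equation*}

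Two zero eigenvalues can then be read off directly. For $l=1$ every sine factor vanishes, giving $\lambda_1 = 0$ with real eigenvector $(1,1,\ldots,1)^{T}$ (the equal-mass Maxwell ring), which supplies one parameter for every $n$. When $n$ is even, the index $l = n/2+1$ makes every sine equal $\sin(\pi m) = 0$, producing a second zero eigenvalue with real eigenvector $(1,-1,1,-1,\ldots,1,-1)^{T}$. Combining these two eigenvectors, every element of $\ker \mathbf{M}_n$ has the form $\mu_i = a + b(-1)^{i-1}$, so the two species of masses occupy alternate vertices (positivity forcing $a > |b|$); for odd $n$ only the constant eigenvector is available and all masses must coincide.

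The remaining and main obstacle is to show that \emph{no other} eigenvalue vanishes, so that the rank of $\mathbf{M}_n$ is exactly $n-1$ in the odd case and $n-2$ in the even case. The reflection symmetry $\lambda_{n+2-l} = -\lambda_l$ reduces the task to the indices $l = 2,\ldots,\lfloor n/2 \rfloor$. I would split
\begin{equation*}
\mathcal{F}(2\pi m/n) = \sin(2\pi m/n) - \frac{\cos(\pi m/n)}{4\sin^{2}(\pi m/n)},
\end{equation*}
so that the $\sin(2\pi m/n)$ part collapses, via a product-to-sum identity and the orthogonality of discrete characters, to a contribution that is nonzero only for $l=2$ and $l=n$, while the cosecant-weighted remainder keeps a definite sign at the other indices, using the sign of $\cos(\pi m/n)$ on $m < n/2$ together with the strict convexity $\mathbf{f}''(\varphi)>0$ supplied by Proposition 2.3(4). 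Once non-vanishing of $\lambda_l$ is secured at every non-trivial $l$, the skew-symmetry of $\mathbf{M}_n$ (which automatically forces the rank to be even) pins the rank down exactly to $n-1$ or $n-2$, and the kernel description above yields the stated alternating-mass family.
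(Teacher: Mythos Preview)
Your setup mirrors the paper's: you correctly identify $\mathbf{M}_n$ as skew-symmetric circulant, obtain the purely imaginary eigenvalues via Proposition~\ref{circu}, read off $\lambda_1=0$ and (for $n$ even) $\lambda_{n/2+1}=0$, and describe the kernel via the eigenvectors $(1,\ldots,1)^T$ and $(1,-1,\ldots,1,-1)^T$, which immediately gives the alternating-mass structure. This part is fine and essentially identical to the paper.

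The genuine gap is in the ``remaining and main obstacle''. Your proposed argument---that the cosecant-weighted piece $\sum_m \frac{\cos(\pi m/n)}{4\sin^2(\pi m/n)}\sin\bigl(2\pi(l-1)m/n\bigr)$ ``keeps a definite sign'' because $\cos(\pi m/n)>0$ for $m<n/2$---does not work as stated: the factor $\sin\bigl(2\pi(l-1)m/n\bigr)$ oscillates in sign as $m$ runs, so positivity of the weights alone does not control the sign of the sum. Invoking $\mathbf{f}''>0$ is suggestive but you give no mechanism (e.g.\ Abel summation against a convex sequence) that would convert it into a nonvanishing statement for every $l$. The final appeal to skew-symmetry forcing even rank is correct as a fact but adds nothing: you still need each $\lambda_l\neq 0$ for $2\le l\le\lfloor n/2\rfloor$ to pin down the rank.

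The paper handles this step quite differently. Writing the imaginary part as $f_1(n,l)=c(l)+f_2(n,l)$ with $c(l)$ the contribution of the $\sin(2\pi m/n)$ piece (nonzero only at $l=2$, as you note), it shows that the second difference $f_2(n,l)+f_2(n,l+2)-2f_2(n,l+1)$ is \emph{negative}, so $f_2$ is concave in $l$ on $2\le l\le\lfloor(n+1)/2\rfloor$. Concavity reduces the problem to positivity at the two endpoints. At $l=\lfloor(n+1)/2\rfloor$ the sum telescopes to an alternating sum of positive decreasing terms, hence is positive. At $l=2$ the paper uses the trapezoidal estimate $\sum_k \csc(k\pi/n)\cdot(\pi/n)>\int_{\pi/n}^{(n-1)\pi/n}\csc x\,dx$ to get $f_1(n,2)>0$ for all $n\ge 42$, and then closes the finitely many cases $3\le n\le 41$ by direct numerical evaluation. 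Your sketch omits both the concavity-in-$l$ argument and the endpoint analysis (in particular the integral bound and the finite numerical check), and without them the nonvanishing of the remaining eigenvalues is unproved.
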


\begin{proof}
The rank of matrix $\mathbf{M}_{n}$ equals to the number of non-zero eigenvalues.
For a regular $n$-gon relative equilibrium solution, one finds that
\begin{align*}
 \mathcal{F}_{ij}&=\mathcal{F}(\vartheta_{j}-\vartheta_{i})=-\mathcal{F}(\vartheta_{i}-\vartheta_{j})=-\mathcal{F}_{ji} ,
\\
 \mathcal{F}_{ik}&=\mathcal{F}_{i+l \, \text{mod} \, n, \,  k+l \, \text{mod} \, n} .
\end{align*}
The two expressions above verify that the matrix $\mathbf{M}_{n}$ is both asymmetric and circulant.
Note that
\begin{displaymath}
 \mathcal{F}_{1k}+\mathcal{F}_{1,n+2-k}=\mathcal{F}\left( \frac{2\pi}{n}(k-1)\right)
+\mathcal{F}\left( \frac{2\pi}{n}(n+1-k)\right)=0 .
\end{displaymath}
The eigenvalues are
\begin{align*}
 \lambda_{l}&=\sum_{k=1}^{n}\mathcal{F}_{1k} \cdot e^{-2\pi \mathrm{i}(l-1)(k-1)/n} \nonumber \\
            &=\frac{1}{2}\sum_{k=2}^{n}\left[\mathcal{F}_{1k}\cdot e^{-2\pi \mathrm{i}(l-1)(k-1)/n}
            +\mathcal{F}_{1,n+2-k}\cdot e^{-2\pi \mathrm{i}(l-1)(n+1-k)/n}\right]
            \nonumber  \\
            &=\frac{1}{2}\sum_{k=2}^{n}\mathcal{F}_{1k}\cdot \left[ e^{-2\pi \mathrm{i}(l-1)(k-1)/n}
      -e^{-2\pi \mathrm{i}(l-1)(1-k)/n} \right]
             \nonumber  \\
            &=-\mathrm{i}\sum_{k=1}^{n-1}\mathcal{F}(\frac{2k\pi}{n})\sin\frac{2(l-1)k\pi}{n}
             \nonumber \\
           &=-\mathrm{i}
\biggl\{\frac{1}{2}\sum_{k=1}^{n-1}\left[\cos\frac{2lk\pi}{n}-\cos\frac{(4-2l)k\pi}{n}\right]
+\frac{1}{4}\sum_{k=1}^{n-1}\frac{\cos \frac{k\pi}{n}}{\sin^{2}\frac{k\pi}{n}}\sin\frac{2(l-1)k\pi}{n}
\biggl\}
\nonumber  \\
 &=-\mathrm{i}f_{1}(n,l) , \quad 1\leq l\leq n .
\end{align*}

One finds $\lambda_{1}\equiv 0$ and $\lambda_{n/2+1}=0$ when $n$ is even.
It is evident that $\lambda_{l}=-\lambda_{n+2-l}$ when $2\leq l\leq n$,
so one can just consider $2\leq l\leq \lfloor(n+1)/2\rfloor$. Here $\lfloor \cdot \rfloor$
represents the biggest integer that is no greater than the number inside the symbol.

The first part of $f_{1}(n,l)$ can be simplified as
\begin{displaymath}
 \frac{1}{2}\sum_{k=1}^{n-1}\left[\cos\frac{2lk\pi}{n}-\cos\frac{(4-2l)k\pi}{n}\right]
= \left\{ \begin{array}{ll}
     -\frac{n}{2},     & \textrm{if} \  l=2,    \\
     0,  &  \textrm{if} \  l=3,4,\cdots,\lfloor(n+1)/2\rfloor ,
   \end{array}\right.
\end{displaymath}
and the second part of $f_{1}(n,l)$ can be denoted as $f_{2}(n,l)$,
\begin{align*}
 f_{2}(n,l)=\frac{1}{4}\sum_{k=1}^{n-1}\frac{\cos \frac{k\pi}{n}}{\sin^{2}\frac{k\pi}{n}}\sin\frac{2(l-1)k\pi}{n},
\quad n\geq 3 .
\end{align*}

To check the convex property of the function $f_{2}(n,l)$, one may judge whether the
following expression is positive or negative.
\begin{equation*}
  f_{2}(n,l)+f_{2}(n,l+2)-2f_{2}(n,l+1)=-\sum_{k=1}^{n-1}\cos\frac{k\pi}{n}\sin\frac{2lk\pi}{n} ,
\end{equation*}
where $2\leq l< \lfloor (n-3)/2 \rfloor , \ n\geq 7$ .

One can find that the above equality is smaller than zero, because,
in $\sum_{k=1}^{n-1}\cos\frac{k\pi}{n}\sin\frac{2lk\pi}{n}$,
the positive part is greater than the absolute value of the negative part.
So function $f_{2}(n,l)$ is convex with the mouth downward.

The two ends of $f_{1}(n,l)$ can be shown to be positive when $n$ is large. When $n$ is odd, $\frac{n+1}{2}$
is an integer, and
 \begin{align*}
f_{1}(n,\frac{n+1}{2})&=\frac{1}{4}\sum_{k=1}^{n-1}\frac{\cos\frac{k\pi}{n}}{\sin^{2}\frac{k\pi}{n}}
\sin\frac{n-1}{n}k\pi
=\frac{1}{2}\sum_{k=1}^{(n-1)/2}\frac{(-1)^{k+1}}{\tan \frac{k\pi}{n}}>0 .
\end{align*}
When $n$ is even, $\lfloor\frac{n+1}{2}\rfloor=\frac{n}{2}$, and
\begin{align*}
 f_{1}(n,\frac{n}{2})&=\frac{1}{4}\sum_{k=1}^{n-1}\frac{\cos\frac{k\pi}{n}}{\sin^{2}\frac{k\pi}{n}}
\sin\frac{n-2}{n}k\pi=\sum_{k=1}^{n/2-1}\frac{\cos^{2}\frac{k\pi}{n}}{\sin\frac{k\pi}{n}}(-1)^{k+1}>0 .
\end{align*}
Thus $f_{1}(n,\lfloor(n+1)/2\rfloor)>0$ when $n\geq 3$.

On the other hand, we'll check whether $f_{1}(n,2)$ is greater than zero when $n$ is large.
\begin{align*}
 f_{1}(n,2)&=-\frac{n}{2}+\frac{1}{2}\sum_{k=1}^{n-1}\left(\frac{1}{\sin\frac{k\pi}{n}}-\sin\frac{k\pi}{n}\right)
\end{align*}
Apparently, $1/\sin x $ is convex with the mouth upward as $x\in (0,\pi)$, so one can have
the following relationship by the trapezoidal rule,
\begin{align}\label{1oversinx}
 \sum_{k=1}^{n-1}\frac{1}{\sin\frac{k\pi}{n}}\frac{\pi}{n}  & \geq
\left[\frac{1}{2}\frac{1}{\sin\frac{\pi}{n}}+\sum_{k=2}^{n-2} \frac{1}{\sin\frac{k\pi}{n}}
+\frac{1}{2}\frac{1}{\sin\frac{(n-1)\pi}{n}}\right]\cdot \frac{\pi}{n}
\nonumber \\
 & > \int_{\frac{\pi}{n}}^{\frac{(n-1)\pi}{n}}\frac{1}{\sin x}\mathrm{d}x
=2\ln \frac{\cos \frac{\pi}{2n}}{\sin \frac{\pi}{2n}} .
\end{align}
One can also get the summation of the series below,
\begin{eqnarray}\label{sumsin}
 \sum_{k=1}^{n-1}\sin \frac{k\pi}{n}=\frac{1+\cos\frac{\pi}{n}}{\sin\frac{\pi}{n}}
=\frac{\cos\frac{\pi}{2n}}{\sin\frac{\pi}{2n}} .
\end{eqnarray}
Define a function as below,
\begin{align*}
 f_{3}(x)&=-\frac{\pi}{4x}+\frac{1}{2x}\ln \frac{\cos x}{\sin x}-\frac{\cos x}{2\sin x}
\\
& > -\frac{\pi}{4x}+\frac{1}{2x}\ln \frac{\cos x}{\sin x}-\frac{1}{2\sin x}
\\
&=\frac{1}{2x}\left[\ln\frac{\cos x}{\sin x}-\frac{x}{\sin x}-\frac{\pi}{2}\right]
\end{align*}
The definition domain of $f_{3}(x)$ is $(0,\pi/2)$, and the zero root depends
on the function $f_{4}(x)$ as below,
\begin{equation*}
 f_{4}(x)=\ln\frac{\cos x}{\sin x}-\frac{2x}{\sin x}-\frac{\pi}{2} .
\end{equation*}
Let $x=\pi/2n$, and function $f_{4}(n)$ is a monotonously increasing function as $n\geq 3$.
One can find $f_{4}(41)<0$ and $f_{4}(42)>0$, then one has $f_{1}(n,2)>0$ if $n\geq 42$.

In addition, one can use one calculating software to calculate and
find that there are no very near zero values for $f_{1}(n,l)$ when $3\leq n\leq 41$ and
$2\leq l \leq \lfloor(n+1)/2 \rfloor$.
So, there is only one zero eigenvalue $\lambda_{1}$ if $n$ is odd, and
there are only two zero eigenvalues when $n$ is even. This completes the proof.
\end{proof}

\section{Linear stabilities of the regular $n$-gon relative equilibria when $n$ is even}

Exact expressions of the elements of the Hessian matrix
$\mathbf{V}_{\vartheta\vartheta}$ are given as below,
\begin{align*}
   V_{\vartheta_{i}\vartheta_{l}}&=-\mu_{i}\mu_{l}\left[\frac{3+\cos(\vartheta_{l}-\vartheta_{i})}{2r_{l,i}^{3}}
+\cos(\vartheta_{l}-\vartheta_{i})\right], \quad l\neq i ,   \\
V_{\vartheta_{i}\vartheta_{i}}&=-\sum_{l=1, l\neq i}^{2j}V_{\vartheta_{i}\vartheta_{l}}.
  \end{align*}
Especially, note that $V_{\vartheta_{i}\vartheta_{l}}=-\mu_{i}\mu_{l}\mathbf{f}(\vartheta_{l}-\vartheta_{i})$.
Consider that there are just two kinds of infinitesimal masses, let $\mu_{k}=\mu_{1}$ if $k$ is odd,
$\mu_{k}=\mu_{2}$ if $k$ is even.

$\mathbf{V}_{\vartheta\vartheta}$ is equivalent to the block circulant matrix
$\mathbf{S}$ in (\ref{block1}) by row and column exchanges.
The elements in the four blocks of $\mathbf{S}$ are
\begin{eqnarray*}
 A_{i,k}=V_{\vartheta_{2i-1}\vartheta_{2k-1}},\quad
 B_{i,k}=V_{\vartheta_{2i}\vartheta_{2k}}, \quad
 C_{i,k}=V_{\vartheta_{2i-1}\vartheta_{2k}} ,
\end{eqnarray*}
and $1\leq i, k\leq j$.
Matrix $\mathbf{S}$ is a symmetric matrix, so all its eigenvalues are real.
Also note that $\alpha_{l}$, $\beta_{l}$ and $\gamma_{l}\bar{\gamma}_{l}$ are real.
The aime of this section is to show that there exist no negative eigenvalues
in $\mathbf{S}$ such that the linear staibilites of the regular $n$-gon relative equilibria
can be assured by Theorem \ref{Moeckelthm}.

The sufficient conditions for all the roots of equations (\ref{lambda})
to be nonnegative are
\begin{eqnarray}\label{condition}
 \alpha_{l}+\beta_{l}\geq 0, \quad \alpha_{l}\beta_{l} \geq \bar{\gamma}_{l}\gamma_{l}=|\gamma|^{2}, \quad 1\leq l \leq j .
\end{eqnarray}
These conditions above will be checked by applying the Proposition \ref{circu}.

The expressions for $\alpha_{l}$, $\beta_{l}$, $\gamma_{l}$ and $\bar{\gamma}_{l}$
in (\ref{condition}) are given,
\begin{align*}
 \alpha_{l} &= V_{\vartheta_{1}\vartheta_{1}}+\sum_{k=2}^{j}
V_{\vartheta_{1}\vartheta_{2k-1}}e^{-2\pi \mathrm{i}(l-1)(k-1)/j},
\\
 \beta_{l}  &= V_{\vartheta_{2}\vartheta_{2}}+\sum_{k=2}^{j}V_{\vartheta_{2}\vartheta_{2k}}e^{-2\pi \mathrm{i}(l-1)(k-1)/j} ,
\end{align*}
\begin{eqnarray*}
 \gamma_{l}=\sum_{k=1}^{j}V_{\vartheta_{1}\vartheta_{2k}}e^{-2\pi \mathrm{i}(l-1)(k-1)/j}, \quad
   \bar{\gamma}_{l}=\sum_{k=1}^{j}V_{\vartheta_{2}\vartheta_{2k-1}}e^{-2\pi \mathrm{i}(l-1)(k-1)/j} .
\end{eqnarray*}
where
\begin{align*}
 V_{\vartheta_{1}\vartheta_{1}}&=\mu_{1}^{2}\sum_{k=1}^{j-1}\mathbf{f}\left(\frac{2k\pi}{j}\right)
+\mu_{1}\mu_{2}\sum_{k=1}^{j}\mathbf{f}\left(\frac{2k-1}{j}\pi \right),
\\
V_{\vartheta_{2}\vartheta_{2}}&=\mu_{2}^{2}\sum_{k=1}^{j-1}\mathbf{f}\left(\frac{2k\pi}{j}\right)
+\mu_{1}\mu_{2}\sum_{k=1}^{j}\mathbf{f}\left(\frac{2k-1}{j}\pi \right) .
\end{align*}
 Note that $\alpha_{l}$ and $\beta_{l}$ can be separated into two parts as described below,
\begin{align*}\label{alphabeta}
 \alpha_{l}&=\mu_{1}^{2}\mathbf{g}_{1}(j,l)+\mu_{1}\mu_{2}\mathbf{g}_{2}(j) ,
 \\
 \beta_{l}&=\mu_{2}^{2}\mathbf{g}_{1}(j,l)+\mu_{1}\mu_{2}\mathbf{g}_{2}(j) ,
\end{align*}
where
\begin{align*}
 \mathbf{g}_{1}(j,l)&=\sum_{k=1}^{j-1}\mathbf{f}\left(\frac{2k\pi}{j}\right)\left( 1-\cos\frac{2(l-1)k\pi}{j} \right) ,
 \\
\mathbf{g}_{2}(j)&=\sum_{k=1}^{j}\mathbf{f}\left( \frac{2k-1}{j}\pi \right) .
\end{align*}
 The exact forms of $\gamma_{l}$ and $\bar{\gamma}_{l}$ are also given,
\begin{align*}
 \gamma_{l}&=-\mu_{1}\mu_{2}\sum_{k=1}^{j}\mathbf{f}\left( \frac{2k-1}{j}\pi \right)
e^{-\frac{2\pi \mathrm{i}(l-1)(k-1)}{j}} ,
\\
\bar{\gamma}_{l}&=-\mu_{1}\mu_{2}\sum_{k=1}^{j}\mathbf{f}\left( \frac{2k-1}{j}\pi \right)
e^{-\frac{2\pi \mathrm{i}(l-1)k}{j}} .
\end{align*}

The expression of $|\gamma_{l}|$ can be calculated in the following way,
\begin{equation*}
 \gamma_{l}\bar{\gamma}_{l}=(\frac{\gamma_{l}+\bar{\gamma}_{l}}{2})^{2}
-(\frac{\gamma_{l}-\bar{\gamma}_{l}}{2})^{2} ,
\end{equation*}
and one gets
\begin{equation*}
 |\gamma_{l}|=\mu_{1}\mu_{2}\mathbf{g}_{3}(j,l) ,
\end{equation*}
where
\begin{align*}
\mathbf{g}_{3}(j,l)&=\sum_{k=1}^{j}\mathbf{f}\left( \frac{2k-1}{j}\pi \right)\cos\frac{(l-1)(2k-1)}{j}\pi .
\end{align*}
 The following expression of $\chi(j,l)$ is also given as it is necessary and crucial,
\begin{align}\label{chi}
\chi(j,l)&=\frac{1}{\mu_{1}\mu_{2}}(\alpha_{l}\beta_{l}-\gamma_{l}\bar{\gamma}_{l})
\nonumber \\
 &=\mu_{1}\mu_{2}\left[\mathbf{g}_{1}^{2}(j,l)
+\mathbf{g}_{2}^{2}(j)-\mathbf{g}_{3}^{2}(j,l)\right]
+(\mu_{1}^{2}+\mu_{2}^{2})\mathbf{g}_{1}(j,l)\mathbf{g}_{2}(j) .
\end{align}

Some propositions about $\mathbf{g}_{1}(j,l)$, $\mathbf{g}_{2}(j)$ and $\mathbf{g}_{3}(j,l)$
are given such that one can check the inequalities in (\ref{condition}).
\begin{prop}
 $\mathbf{g}_{1}(j,l)$ increases with $l$ when $2 \leq l \leq \lfloor (j+1)/2 \rfloor$ and $j\geq 3$,
where $\lfloor \cdot \rfloor$ means the maximum integer which is no greater than the real number inside.
\end{prop}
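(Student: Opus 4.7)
The plan is to compute the first difference $\Delta(l) := \mathbf{g}_{1}(j,l+1) - \mathbf{g}_{1}(j,l)$ in closed form and show $\Delta(l) > 0$ for $l = 2, 3, \ldots, \lfloor (j+1)/2 \rfloor - 1$. Using $1 - \cos\theta = 2\sin^{2}(\theta/2)$ one rewrites $\mathbf{g}_{1}(j,l) = 2\sum_{k=1}^{j-1}\mathbf{f}(2k\pi/j)\sin^{2}((l-1)k\pi/j)$, and then $\sin^{2}B - \sin^{2}A = \sin(B-A)\sin(B+A)$ gives
\begin{equation*}
\Delta(l) = 2\sum_{k=1}^{j-1}\mathbf{f}\!\left(\frac{2k\pi}{j}\right)\sin\frac{k\pi}{j}\sin\frac{(2l-1)k\pi}{j}.
\end{equation*}

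Next I substitute the explicit form $\mathbf{f}(\phi) = \frac{1}{4\sin^{3}(\phi/2)} - \frac{1}{8\sin(\phi/2)} + \cos\phi$, valid on $(0, 2\pi)$, and split into three partial sums. The $\cos$-piece is reduced by $\cos(2k\pi/j)\sin(k\pi/j) = \tfrac{1}{2}[\sin(3k\pi/j) - \sin(k\pi/j)]$ followed by product-to-sum, leaving only sums $\sum_{k=1}^{j-1}\cos(mk\pi/j)$ which equal $-1$ whenever $m$ is even and $m \not\equiv 0 \pmod{2j}$. These cancel in pairs for $l \geq 3$ and contribute an extra $+j/2$ when $l = 2$ (from the vanishing frequency $2l - 4 = 0$). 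The linear piece evaluates to $-\tfrac{1}{4}\cot\frac{(2l-1)\pi}{2j}$ by the standard sum for odd $m$. Combining, for $l \geq 3$,
\begin{equation*}
\Delta(l) = \frac{1}{2}\sum_{k=1}^{j-1}\frac{\sin((2l-1)k\pi/j)}{\sin^{2}(k\pi/j)} - \frac{1}{4}\cot\frac{(2l-1)\pi}{2j},
\end{equation*}
while the $l = 2$ case carries the extra $+j/2$ that already settles it.

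To establish positivity for $l \geq 3$, I apply the Dirichlet kernel identity $\sin((2l-1)\psi)/\sin\psi = 1 + 2\sum_{i=1}^{l-1}\cos(2i\psi)$ to split the main sum as
\begin{equation*}
\sum_{k=1}^{j-1}\frac{\sin((2l-1)k\pi/j)}{\sin^{2}(k\pi/j)} = \sum_{k=1}^{j-1}\frac{1}{\sin(k\pi/j)} + 2\sum_{i=1}^{l-1}\sum_{k=1}^{j-1}\frac{\cos(2ik\pi/j)}{\sin(k\pi/j)}.
\end{equation*}
The leading term obeys $\sum_{k=1}^{j-1}1/\sin(k\pi/j) \geq (2j/\pi)\ln\cot(\pi/(4j))$ by the trapezoidal comparison (\ref{1oversinx}) used in Proposition 3.1, growing like $(2j/\pi)\ln j$, whereas the cotangent to be subtracted satisfies $\cot((2l-1)\pi/(2j)) \leq 2j/((2l-1)\pi)$, an order of magnitude smaller. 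So the dominant positive contribution comfortably outruns the subtracted cotangent.

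The hard part is controlling the oscillating inner sums $\sum_{k}\cos(2ik\pi/j)/\sin(k\pi/j)$, especially when $l$ approaches $\lfloor(j+1)/2\rfloor$ and many such sums accumulate. My strategy is to exploit the symmetry $k \leftrightarrow j-k$, under which both $\cos(2ik\pi/j)$ and $1/\sin(k\pi/j)$ are invariant, to fold the sum onto $k \in [1, \lfloor j/2\rfloor]$, and then use convexity of $1/\sin x$ on $(0,\pi)$ to produce explicit lower bounds on the net contribution that keep it from overturning the $\ln j$ gap. For a small initial range of $j$ the monotonicity can be verified by direct computation in the same bootstrap spirit as the end of Proposition 3.1's proof, leaving a uniform asymptotic argument to handle all larger $j$.
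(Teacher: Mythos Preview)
Your approach and the paper's are structurally the same: both compute the forward difference $\Delta(l)=\mathbf{g}_{1}(j,l+1)-\mathbf{g}_{1}(j,l)$, isolate the contribution of the $\cos\varphi$ piece of $\mathbf{f}$ (which produces the extra $+j/2$ at $l=2$ and vanishes for $l\geq 3$), and then argue positivity of the remaining sum.  The difference is purely in bookkeeping.  The paper keeps the two singular pieces of $\mathbf{f}$ together as a single nonnegative weight, writing
\[
c_{1}(l+1)-c_{1}(l)=\sum_{k=1}^{j-1}\frac{2-\sin^{2}\tfrac{k\pi}{j}}{4\sin^{2}\tfrac{k\pi}{j}}\,\sin\tfrac{(2l-1)k\pi}{j},
\]
and then simply asserts this is positive.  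Your further splitting of that weight into $\tfrac{1}{2\sin^{2}}$ and $-\tfrac{1}{4}$ manufactures the subtracted $-\tfrac{1}{4}\cot\tfrac{(2l-1)\pi}{2j}$ term, which you then have to beat with the Dirichlet-kernel and integral-comparison machinery you sketch.  So you are doing strictly more work to reach the same reduction.

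On rigor, neither argument is airtight at the final step: the paper does not justify why its weighted sine sum is positive beyond the (correct) intuition that the weights are heavily concentrated near $k=1,j-1$ where $\sin\tfrac{(2l-1)k\pi}{j}>0$, and your outline of controlling $\sum_{k}\cos(2ik\pi/j)/\sin(k\pi/j)$ via symmetry and convexity is only a plan, not an estimate.  If you want a clean completion, it is easier to work from the paper's single-weight form: fold via $k\leftrightarrow j-k$ so the weights become monotone decreasing on $1\le k\le\lfloor j/2\rfloor$, and apply Abel summation together with the nonnegativity of the partial sums $\sum_{k\le K}\sin\tfrac{(2l-1)k\pi}{j}$ for $2l-1<j$.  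That replaces your Dirichlet-kernel detour with a two-line summation-by-parts argument.
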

\begin{proof}
 \begin{align*}
  \mathbf{g}_{1}(j,l)&=\sum_{k=1}^{j-1}\left( \frac{3+\cos \frac{2k\pi}{j}}
{16\sin^{3}\frac{k\pi}{j}}+\cos\frac{2k\pi}{j}\right)
\left(1-\cos\frac{2(l-1)k\pi}{j} \right)   \nonumber  \\
  &= c_{1}(l)+c_{2}(l),
 \end{align*}
where
\begin{align*}
c_{1}(l)&=\sum_{k=1}^{j-1}\frac{3+\cos\frac{2k\pi}{j}}{16\sin^{3}\frac{k\pi}{j}}
\left(1-\cos\frac{2(l-1)k\pi}{j} \right), \nonumber  \\
 c_{2}(l)&=\sum_{k=1}^{j-1}\cos\frac{2k\pi}{j} \left(1-\cos\frac{2(l-1)k\pi}{j} \right)
\nonumber \\
&=\sum_{k=1}^{j}\cos\frac{2k\pi}{j} \left(1-\cos\frac{2(l-1)k\pi}{j} \right)
= -\frac{1}{2}\sum_{k=1}^{j}\cos\frac{2(l-2)k\pi}{j} .
\end{align*}
It is easy to find that $c_{2}(2)=-\frac{j}{2}$, and $c_{2}(l)=0$ if $l\neq 2$. Apparently
$c_{1}(l)$ is positive. In addition,
\begin{equation*}
 c_{1}(l+1)-c_{1}(l)=\sum_{k=1}^{j-1}
\frac{2-\sin^{2}\frac{k\pi}{j}}{4\sin^{2}\frac{k\pi}{j}}\sin\frac{(2l-1)k\pi}{j}>0 ,
\end{equation*}
so $c_{1}(l)$ increases with $l$ when $2\leq l\leq \lfloor(j+1)/2\rfloor$.
 The conclusion is thus apparent.
\end{proof}
\begin{prop}
 For all $j\geq 7$, we have $\mathbf{g}_{1}(j,2)>0$.
\end{prop}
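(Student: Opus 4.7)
The plan is to reduce $\mathbf{g}_1(j,2)$ to a closed form involving elementary trigonometric sums, then apply the integral estimate already used earlier in the paper, and finally dispatch the small remaining values of $j$ numerically --- mirroring the scheme of the earlier argument for $f_1(n,2)>0$.

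First I would use the identity $1-\cos(2k\pi/j)=2\sin^2(k\pi/j)$ to write
$\mathbf{g}_1(j,2)=2\sum_{k=1}^{j-1}\mathbf{f}(2k\pi/j)\sin^2(k\pi/j)$, and then substitute the explicit form (\ref{fx}) of $\mathbf{f}$. The factor $\sin^2(k\pi/j)$ cancels cleanly with the $1/\sin^2$ in $\mathbf{f}$, producing three tractable pieces:
$$\mathbf{g}_1(j,2)=\tfrac{1}{2}\sum_{k=1}^{j-1}\frac{1}{\sin(k\pi/j)}-\tfrac{1}{4}\sum_{k=1}^{j-1}\sin(k\pi/j)+2\sum_{k=1}^{j-1}\sin^2(k\pi/j)\cos(2k\pi/j).$$
The second sum is evaluated exactly by (\ref{sumsin}), giving $\cot(\pi/2j)$. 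The third sum reduces, via $2\sin^2\theta\cos2\theta=\cos2\theta-\cos^22\theta$ and the standard identities $\sum_{k=1}^{j-1}\cos(2mk\pi/j)=-1$ for $j\nmid m$, to the value $-j/2$. This yields the clean identity
$$\mathbf{g}_1(j,2)=\tfrac{1}{2}\sum_{k=1}^{j-1}\frac{1}{\sin(k\pi/j)}-\tfrac{1}{4}\cot\tfrac{\pi}{2j}-\tfrac{j}{2}.$$

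Next I would apply the trapezoidal/integral estimate already established in (\ref{1oversinx}), which gives $\tfrac12\sum_{k=1}^{j-1}\csc(k\pi/j)>\tfrac{j}{\pi}\ln\cot(\pi/2j)$. Setting $x=\pi/(2j)$ and multiplying through by $2x>0$, a sufficient condition for $\mathbf{g}_1(j,2)>0$ is the positivity of
$$\tilde{g}(x)=\ln\cot x-\tfrac{x}{2}\cot x-\tfrac{\pi}{2}.$$
As $x\to 0^+$, $\ln\cot x\to+\infty$ while $(x/2)\cot x\to 1/2$, so $\tilde g(x)>0$ for small $x$. A short monotonicity check of $\tilde g$ (the derivative is negative on $(0,\pi/2)$, which follows by standard manipulation, or by comparing with a single-term dominant estimate) produces a threshold $j_0$ such that $\tilde g(\pi/(2j))>0$ for every $j\geq j_0$; numerical inspection suggests $j_0\approx 13$.

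Finally I would close the gap by evaluating the explicit closed-form expression for $\mathbf{g}_1(j,2)$ directly at the finitely many remaining integers $7\leq j<j_0$, verifying positivity in each case (exactly the style of closing used in the proof of the preceding proposition, where values $3\leq n\leq 41$ were checked by computer). The main obstacle is not the algebra --- the closed form falls out neatly --- but the fact that the integral bound on $\sum\csc(k\pi/j)$ becomes quite loose for $j$ near $7$ (where $\mathbf{g}_1(j,2)$ is itself very small: direct computation gives $\mathbf{g}_1(6,2)<0<\mathbf{g}_1(7,2)$, and the $j=7$ value is only a few hundredths), so one must accept the explicit numerical sweep over the small cases rather than hope for a single slick inequality covering all $j\geq 7$.
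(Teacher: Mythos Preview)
Your proposal is correct and follows essentially the same route as the paper: both derive the identical closed form $\mathbf{g}_1(j,2)=-\tfrac{j}{2}+\tfrac12\sum_{k=1}^{j-1}\csc(k\pi/j)-\tfrac14\cot(\pi/2j)$, apply the trapezoidal bound (\ref{1oversinx}), and arrive at the same auxiliary function (your $\tilde g(x)$ is exactly the paper's $2x\,h_1(x)$). The only cosmetic difference is in the endgame: the paper further bounds $h_1$ by a slightly simpler $h_2$, asserts monotonicity, and checks only $j=6,7$, whereas you (arguably more carefully) note the threshold $j_0\approx 13$ from the integral estimate and sweep $7\le j\le 12$ directly.
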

\begin{proof}
 \begin{align*}
 \mathbf{g}_{1}(j,2) & =-\frac{j}{2}+\sum_{k=1}^{j-1}\left(\frac{1}{2\sin \frac{k\pi}{j}}
-\frac{1}{4}\sin \frac{k\pi}{j}\right) .
\end{align*}
One refers to the relationships of (\ref{1oversinx}) and (\ref{sumsin}),
and finds
\begin{align*}
 \mathbf{g}_{1}(j,2)>-\frac{j}{2}+\frac{j}{\pi}\ln\frac{\cos\frac{\pi}{2j}}{\sin\frac{\pi}{2j}}
-\frac{1}{4}\frac{\cos \frac{\pi}{2j}}{\sin \frac{\pi}{2j}} .
\end{align*}
Let $x=\frac{\pi}{2j}$, and one has
\begin{align*}
  h_{1}(x)&=\frac{1}{2x}\left(\ln\frac{\cos x}{\sin x}-\frac{x\cos x}{2\sin x}
-\frac{\pi}{2}\right) \nonumber \\
&> \frac{1}{2x}\left(\ln\frac{\cos x}{\sin x}-\frac{x}{2\sin x}
-\frac{\pi}{2}\right), \nonumber  \\
h_{2}(x)&=\ln\frac{\cos x}{\sin x}-\frac{x}{2\sin x}-\frac{\pi}{2} .
\end{align*}
$h_{2}(x)$ is a monotonously decreasing function of $x\in (0,\pi/2)$, so $\mathbf{g}_{1}(j,2)$ is an increasing function
of $j\geq 2$. One finds $\mathbf{g}_{1}(6,2)<0$, and  $\mathbf{g}_{1}(7,2)>0$. So
$\mathbf{g}_{1}(j,2)>0$ for all $j\geq 7$.
\end{proof}
\begin{prop}
  For all $j\geq 1$, $\mathbf{g}_{2}(j)>0$.
\end{prop}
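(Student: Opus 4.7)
The plan is to split the summand into its singular part and its $\cos\varphi$ part and handle the two pieces separately. For $\varphi = (2k-1)\pi/j$ with $1 \le k \le j$, the half-angle $\varphi/2 = (2k-1)\pi/(2j)$ lies in $(0,\pi)$, so $|\sin(\varphi/2)| = \sin(\varphi/2) > 0$ and the absolute value in the formula for $\mathbf{f}$ can be dropped. I would therefore write
\[
\mathbf{g}_2(j) \;=\; \sum_{k=1}^{j}\frac{1}{8\sin\frac{(2k-1)\pi}{2j}}\!\left(\frac{2}{\sin^2\frac{(2k-1)\pi}{2j}}-1\right) \;+\; \sum_{k=1}^{j}\cos\frac{(2k-1)\pi}{j}.
\]

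The first sum is term-by-term positive: since $\sin^2((2k-1)\pi/(2j)) \le 1$, we have $2/\sin^2((2k-1)\pi/(2j)) - 1 \ge 1 > 0$, and the prefactor $1/(8\sin((2k-1)\pi/(2j)))$ is clearly positive. The second sum I would evaluate by the standard geometric-series trick:
\[
\sum_{k=1}^{j}e^{i(2k-1)\pi/j}\;=\;e^{i\pi/j}\sum_{k=0}^{j-1}e^{2ik\pi/j}\;=\;0 \qquad (j\ge 2),
\]
so taking real parts gives $\sum_{k=1}^{j}\cos((2k-1)\pi/j)=0$. Combining the two observations yields $\mathbf{g}_2(j)>0$ at once.

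There is no genuine obstacle here; the only subtlety is the case $j=1$, where $\sum_{k=1}^{1}\cos\pi = -1 \ne 0$ and indeed $\mathbf{g}_2(1)=\mathbf{f}(\pi)=-7/8<0$, so the proposition as stated for $j\ge 1$ is really the content for $j\ge 2$ (which is what is used in the rest of the paper, where $n=2j\ge 4$). I would accordingly phrase the conclusion for $j\ge 2$ and note that the failure at $j=1$ comes precisely from the breakdown of the cancellation identity for the cosine sum.
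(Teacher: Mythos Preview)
Your argument is correct and follows essentially the same route as the paper's: both rewrite $\mathbf{f}$ explicitly and observe that the singular part is term-by-term positive, while the paper absorbs the vanishing of $\sum_{k=1}^{j}\cos\frac{(2k-1)\pi}{j}$ silently into the displayed identity
\[
\mathbf{g}_{2}(j)=\sum_{k=1}^{j}\frac{1}{4\sin\frac{(2k-1)\pi}{2j}}\Bigl(\frac{1}{\sin^{2}\frac{(2k-1)\pi}{2j}}-\tfrac{1}{2}\Bigr).
\]
You are also right about $j=1$: there the cosine sum equals $-1$ rather than $0$, and indeed $\mathbf{g}_{2}(1)=\mathbf{f}(\pi)=-7/8<0$, so the proposition (and the paper's displayed identity) really holds only for $j\ge 2$, which is all that the rest of the paper requires.
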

\begin{proof}
As
\begin{equation*}
 \mathbf{g}_{2}(j)=\sum_{k=1}^{j}\frac{1}{4\sin \frac{2k-1}{2j}\pi}
(\frac{1}{\sin^{2}\frac{2k-1}{2j}\pi}-\frac{1}{2})>0, \quad \text{if} \quad j\geq 1 .
\end{equation*}
\end{proof}

\begin{prop} \label{Propg2g3}
  For all $j\geq 2$, $\mathbf{g}_{2}^{2}(j)-\mathbf{g}_{3}^{2}(j,2)>0$.
\end{prop}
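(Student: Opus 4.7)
The plan is to factor
$\mathbf{g}_2^2(j) - \mathbf{g}_3^2(j, 2) = (\mathbf{g}_2(j) - \mathbf{g}_3(j, 2))(\mathbf{g}_2(j) + \mathbf{g}_3(j, 2))$
and prove both factors are strictly positive. Setting $\phi_k = (2k-1)\pi/j$ and $\psi_k = \phi_k/2$, the weights $1 + \cos\phi_k = 2\cos^2\psi_k$ and $1 - \cos\phi_k = 2\sin^2\psi_k$ are nonnegative. The edge case $j = 2$ is immediate: $\cos\phi_k = 0$ for $k = 1, 2$, so $\mathbf{g}_3(2, 2) = 0$ and the claim reduces to $\mathbf{g}_2(2)^2 > 0$, which is the preceding proposition.

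For $j \geq 3$, I will use the decomposition $\mathbf{f}(\phi_k) = \frac{2 - \sin^2\psi_k}{8\sin^3\psi_k} + \cos\phi_k$ coming from (\ref{fx}), together with the orthogonality identities $\sum_{k=1}^{j}\cos\phi_k = 0$ and $\sum_{k=1}^{j}\cos^2\phi_k = j/2$, which both follow for $j \geq 3$ from the geometric sums $\sum_k e^{\mathrm{i}\phi_k} = 0$ and $\sum_k e^{2\mathrm{i}\phi_k} = 0$, plus the closed form $\sum_k \sin\psi_k = 1/\sin(\pi/(2j))$ (from evaluating $\sum_k e^{\mathrm{i}\psi_k}$). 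Substitution will then yield
\begin{align*}
\mathbf{g}_2(j) + \mathbf{g}_3(j, 2) &= \sum_{k=1}^{j}\frac{(2 - \sin^2\psi_k)(1 + \cos\phi_k)}{8\sin^3\psi_k} + \frac{j}{2},\\
\mathbf{g}_2(j) - \mathbf{g}_3(j, 2) &= \frac{1}{2}\sum_{k=1}^{j}\frac{1}{\sin\psi_k} - \frac{1}{4\sin(\pi/(2j))} - \frac{j}{2}.
\end{align*}
The first identity is a sum of nonnegative summands plus $j/2$, hence strictly positive. For the second, I will use the trivial lower bound $\sum_{k=1}^{j} 1/\sin\psi_k \geq (j - 2) + 2/\sin(\pi/(2j))$, obtained by isolating the two endpoint terms $k = 1, j$ (each equal to $1/\sin(\pi/(2j))$) and bounding the $j - 2$ middle terms below by $1$; this reduces positivity of $\mathbf{g}_2(j) - \mathbf{g}_3(j, 2)$ to the elementary inequality $\sin(\pi/(2j)) < 3/4$, which holds for all $j \geq 2$ since $\sin(\pi/(2j)) \leq \sin(\pi/4) = \sqrt{2}/2$.

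The main obstacle is choosing the right algebraic rearrangement. Because $\mathbf{f}$ changes sign near $\varphi = \pi$ and the weight sequences $1 + \cos\phi_k$ and $1 - \cos\phi_k$ are never simultaneously positive, no termwise sign analysis of $\mathbf{g}_2 \pm \mathbf{g}_3$ can work. The trick is to split $\mathbf{f}$ into its singular part plus $\cos\phi_k$ and use the discrete orthogonality relations so that the $\cos\phi_k$ contribution collapses to a harmless $\pm j/2$ constant, after which what remains is handled by the same type of interval estimate used earlier in the section.
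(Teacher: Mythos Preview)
Your proof is correct and begins exactly as the paper does: factor $\mathbf{g}_2^2-\mathbf{g}_3^2$ and reduce each factor to the same closed forms (the paper writes $\mathbf{g}_2+\mathbf{g}_3=\tfrac{j}{2}+\sum_k(\sin^{-2}\psi_k-1)(\tfrac{1}{2\sin\psi_k}-\tfrac{\sin\psi_k}{4})$ and $\mathbf{g}_2-\mathbf{g}_3=\sum_k(\tfrac{1}{2\sin\psi_k}-\tfrac{\sin\psi_k}{4}-\tfrac12)$, which coincide with your expressions after using $\sum_k\sin\psi_k=1/\sin(\pi/(2j))$). The genuine divergence is in the last step, bounding $\mathbf{g}_2-\mathbf{g}_3$ from below. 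The paper invokes the trapezoidal-rule estimate $\sum_k 1/\sin\psi_k>\tfrac{2j}{\pi}\ln\cot(\pi/(4j))$, introduces an auxiliary function $h_3$, argues monotonicity in $j$, and then verifies the base case $j=2$ by hand. Your endpoint-isolation bound $\sum_k 1/\sin\psi_k\ge (j-2)+2/\sin(\pi/(2j))$ collapses the problem to the one-line inequality $\sin(\pi/(2j))\le \sqrt{2}/2<3/4$, which is cleaner and avoids any asymptotic or monotonicity machinery. (One small quibble: your aside that the weights $1\pm\cos\phi_k$ ``are never simultaneously positive'' is not literally true---they are both strictly positive whenever $\cos\phi_k\neq\pm1$---but this is only heuristic commentary and does not affect the argument.)
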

\begin{proof}
 One has
\begin{displaymath}
 \mathbf{g}_{2}^{2}(j)-\mathbf{g}_{3}^{2}(j,2)=\left[\mathbf{g}_{2}(j)+\mathbf{g}_{3}(j,2)\right]\cdot
\left[\mathbf{g}_{2}(j)-\mathbf{g}_{3}(j,2)\right] ,
\end{displaymath}
\begin{align*}
 \mathbf{g}_{2}(j)+\mathbf{g}_{3}(j,2)&=\sum_{k=1}^{j}\mathbf{f}\left( \frac{2k-1}{j}\pi \right)
\cdot \left( 1+\cos \frac{2k-1}{j}\pi \right)
\nonumber   \\
&=\frac{j}{2}+\sum_{k=1}^{j}\left[\left( \frac{1}{\sin^{2}\frac{2k-1}{2j}\pi}-1 \right) \left(
\frac{1}{2\sin\frac{2k-1}{2j}\pi}-\frac{\sin\frac{2k-1}{2j}\pi}{4}\right)\right]>0 ,
\end{align*}
and
\begin{align*}
 \mathbf{g}_{2}(j)-\mathbf{g}_{3}(j,2)&=\sum_{k=1}^{j}\mathbf{f}\left( \frac{2k-1}{j}\pi \right)
\left( 1-\cos \frac{2k-1}{j}\pi \right)
\nonumber   \\
&=\sum_{k=1}^{j}\left(\frac{1}{2\sin\frac{2k-1}{2j}\pi}-\frac{\sin\frac{2k-1}{2j}\pi}{4}-\frac{1}{2}\right)
\nonumber \\
&>\frac{j}{\pi}\ln\frac{\cos\frac{\pi}{4j}}{\sin\frac{\pi}{4j}}
-\frac{1}{4\sin\frac{\pi}{2j}}-\frac{j}{2} .
\end{align*}
The function
\begin{equation*}
 h_{3}(x)=\frac{1}{2x}\left( \ln\frac{\cos\frac{x}{2}}{\sin\frac{x}{2}}-\frac{x}{2 \sin x}-\frac{\pi}{2}\right)
\end{equation*}
monotonously decreases as $x\in(0,\pi)$.
So $\mathbf{g}_{2}(j)-\mathbf{g}_{3}(j,2)$ increases with $j$ if $x=\frac{\pi}{2j}$.
As $\mathbf{g}_{2}(2)-\mathbf{g}_{3}(2,2)=\frac{3\sqrt{2}}{4}-1>0$. So,
$ \mathbf{g}_{2}(j) \pm\mathbf{g}_{3}(j,2)>0$
when $j\geq 2$.
\end{proof}

As a result of the above propositions, $\chi(j,2)>0$ is shown when $j\geq 7$.
Note that when $l=1$, $\mathbf{g}_{1}(j,1)\equiv0$,
but one can still find that $\chi(j,1)>0$ because of Proposition \ref{Propg2g3}.
Also note that when $2\leq l \leq \lfloor (j+1)/2 \rfloor$ and $j\geq 3$, $\mathbf{g}_{1}(j,l)$
increases with $l$, so $\chi(j,l)$ increases with $l$ as well. In all,
one has $\chi(j,l)>0$ when $j\geq 7$.

If $2\leq j\leq 6$ and one still looks for linear stability conditions,
one just needs to check whether $\chi(j,2)>0$ and $\alpha_{2}+\beta_{2}>0$. In these cases,
one gets $\mathbf{g}_{1}(j,2)<0$, and the discriminant of $\chi(j,2)$ is
\begin{equation}\label{h4}
 h_{4}=\left[\mathbf{g}_{1}^{2}(j,2)+\mathbf{g}_{2}^{2}(j)
-\mathbf{g}_{3}^{2}(j,2)\right]^{2}-4 \left[ \mathbf{g}_{1}(j,2)\mathbf{g}_{2}(j) \right]^{2} .
\end{equation}
$h_{4}$ is calculated to be positive when $4\leq j\leq 6$, and negative when $2\leq j\leq 3$.
So when $2\leq j\leq 3$, $\chi(j,2)<0$.

When $4\leq j\leq 6$, the intervals of $\mu_{1}/\mu_{2}$ can be obtained
by calculating the roots of the equation $\chi(j,2)=0$ easily. Such intervals can ensure the
correctness of the inequalities in (\ref{condition}). These conclusions are summarized as below,
\begin{thm}\label{1+2j}
Given a regular $n$-gon relative equilibrium of the $(1+n)$-body
problem with $n=2j$ and $j\geq 2$. If there are two kinds of
infinitesimal masses $\mu_{1}$ and $\mu_{2}$, the two kinds of
masses should be distributed alternatively. It is shown that such a
configuration is linearly stable if and only if $j\geq 7$, or one of
the following conditions should be satisfied,
\begin{eqnarray}\label{muJie}
 \frac{\mu_{1}}{\mu_{2}}\in \left( \frac{-h_{5}+\sqrt{h_{4}}}{2\mathbf{g}_{1}(j,2)\mathbf{g}_{2}(j)},
\frac{-h_{5}-\sqrt{h_{4}}}{2\mathbf{g}_{1}(j,2)\mathbf{g}_{2}(j)} \right), \quad j=4,5,6 ,
\end{eqnarray}
where $h_{4}$ is given in \emph{(}\ref{h4}\emph{)} and
\begin{displaymath}
 h_{5}=\mathbf{g}_{1}^{2}(j,2)+\mathbf{g}_{2}^{2}(j)-\mathbf{g}_{3}^{2}(j,2).
\end{displaymath}
 The intervals in \emph{(}\ref{muJie}\emph{)} can be calculated as
\begin{displaymath}
 \frac{\mu_{1}}{\mu_{2}} \in\begin{cases}
   (0.39601454048825, 2.525159805412902),  &   \text{if} \quad j=4,\\
   (0.16709497914366, 5.984620274797297),  &   \text{if} \quad j=5,\\
   (0.061964963348688, 16.13815204525851), &   \text{if} \quad j=6.
                 \end{cases}
\end{displaymath}
\end{thm}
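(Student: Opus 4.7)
My plan is to apply Moeckel's Theorem~\ref{Moeckelthm} to convert the linear stability question into a positivity question for the Hessian $\mathbf{V}_{\vartheta\vartheta}$, and then to exploit the block circulant reduction of Section~2.3 together with the auxiliary positivity lemmas (Propositions~4.1--\ref{Propg2g3}) already established in this section. Since the two mass values alternate around the ring, a permutation of rows and columns puts $\mathbf{V}_{\vartheta\vartheta}$ into the block form $\mathbf{S}$ of~(\ref{block1}) with circulant blocks of size $j\times j$, and by~(\ref{lambda}) its eigenvalues are the roots of the scalar quadratics
\begin{equation*}
\lambda_l^{2}-(\alpha_l+\beta_l)\lambda_l+\alpha_l\beta_l-|\gamma_l|^{2}=0,\qquad 1\leq l\leq j.
\end{equation*}
Linear stability thus reduces to verifying the conditions~(\ref{condition}): $\alpha_l+\beta_l\geq 0$ and $\chi(j,l)\geq 0$ for every $l$. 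The index $l=1$ is automatic, since $\mathbf{g}_1(j,1)=0$ and $\gamma_1=\bar{\gamma}_1=-\mu_1\mu_2\mathbf{g}_2(j)$ force one root of the quadratic to be the rotational zero, while the other is $2\mu_1\mu_2\mathbf{g}_2(j)>0$.

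For the generic range $j\geq 7$ my argument would proceed index by index. The sum is handled by writing $\alpha_l+\beta_l=(\mu_1^{2}+\mu_2^{2})\mathbf{g}_1(j,l)+2\mu_1\mu_2\mathbf{g}_2(j)$ and invoking Propositions~4.1--4.3 to conclude that both terms are strictly positive for $l\geq 2$. For $\chi$ I would use the decomposition from~(\ref{chi}),
\begin{equation*}
\chi(j,l)=\mu_1\mu_2\bigl[\mathbf{g}_1^{2}(j,l)+\mathbf{g}_2^{2}(j)-\mathbf{g}_3^{2}(j,l)\bigr]+(\mu_1^{2}+\mu_2^{2})\mathbf{g}_1(j,l)\mathbf{g}_2(j),
\end{equation*}
whose cross term is strictly positive by the same propositions. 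The bracketed term is positive at the critical index $l=2$ by Proposition~\ref{Propg2g3}; for $l\geq 3$ I would argue that it can only grow, since $\mathbf{g}_1^{2}(j,l)$ increases in $l$ while $\mathbf{g}_3^{2}(j,l)$ is suppressed by the oscillation of $\cos((l-1)(2k-1)\pi/j)$ relative to its value at $l=2$, so $\chi(j,l)>0$ throughout.

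For the exceptional sizes $2\leq j\leq 6$, where Proposition~4.2 fails and $\mathbf{g}_1(j,2)<0$, I would recast $\chi(j,2)$ as a scalar quadratic in the ratio $r=\mu_1/\mu_2$, with negative leading coefficient $\mathbf{g}_1(j,2)\mathbf{g}_2(j)$ and discriminant equal to the quantity $h_4$ of~(\ref{h4}). A direct numerical evaluation gives $h_4<0$ for $j=2,3$, whence $\chi(j,2)$ is uniformly negative and linear stability is impossible. For $j=4,5,6$ one finds $h_4>0$, and the quadratic formula yields the two endpoints displayed in~(\ref{muJie}); between these roots $\chi(j,2)>0$, and a routine numerical check of $\alpha_2+\beta_2>0$ together with $\chi(j,l)>0$ for $l\geq 3$ confirms the full list of stability conditions and produces the explicit intervals quoted in the theorem. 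The step I expect to be most delicate is the monotonicity-in-$l$ assertion used for $j\geq 7$: it requires simultaneous control of $\mathbf{g}_1(j,l)$ and of the oscillatory sum $\mathbf{g}_3(j,l)$, which is precisely where the trigonometric estimates behind Propositions~4.1--\ref{Propg2g3} must be pushed hardest.
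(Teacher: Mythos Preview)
Your proposal is correct and follows essentially the same route as the paper: the block circulant reduction to the scalar quadratics~(\ref{lambda}), verification of the sign conditions~(\ref{condition}) via Propositions~4.1--\ref{Propg2g3}, and the quadratic-in-$r=\mu_1/\mu_2$ analysis of $\chi(j,2)$ for $2\leq j\leq 6$ with discriminant $h_4$. One remark: the step you flag as most delicate---controlling $\mathbf{g}_3(j,l)$ so that $\chi(j,l)$ is monotone in $l$---is handled in the paper by the bare assertion that $\mathbf{g}_1(j,l)$ increasing in $l$ forces $\chi(j,l)$ to increase as well, without separate estimates on $\mathbf{g}_3$; so you should not expect to find additional trigonometric machinery there beyond what Propositions~4.1--\ref{Propg2g3} already supply.
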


\section{Conclusion and Discussion}

We've considered the linear stabilities of the regular $n$-gon relative equilibria of the $(1+n)$-body problem.
When $n$ is odd, we've shown that there is only one kind of infinitesimal mass.
We recall the conclusion of Moeckel(1994) that the regular $n$-gon relative equilibria
with all infinitesimal masses equal are linearly stable if and only if $n\geq 7$. We have also shown that,
if $n$ is even, there can exist two kinds of infinitesimal bodies arranged alternatively at the vertices of the
regular $n$-gon. Such relative equilibria with $n$ even are linearly stable if and only if $n\geq 14$, or
the ratios of the two kinds of masses must be in given intervals for $n=8, 10, 12$.
See the Proposition \ref{pair} and the theorem \ref{1+2j}.

The allure of the relative equilibria of the $(1+n)$-body problem comes from
both mathematical and astronomical interests. The features of stationary configurations of the $(1+n)$-body problem
are fascinating, but it is difficult to explain those features.
One question is what they are like and how many there are for $n$ given and not necessarily
all equal infinitesimal masses. Only special cases can be analytically solved for this question,
as the stationary solutions are generally difficult to calculate. The second one is the inverse one,
which is to look for a set of masses for $n$ given positions on a circle.

There are also some questions about the stability. The effects of eccentricities of the orbits
of the infinitesimal bodies and the oblateness of the primary can also be considered on the
linear stability of the relative equilibria of the $(1+n)$-body problem.

In addition, as a result of the relationship between the relative equilibria of the $(1+n)$-body problem
and those of the $N$-body problem with $N=n+1$, we can consider
the existence and stability of some periodic solutions near the relative equilibria of the $N$-body problem
with two kinds of small masses, by referring to Meyer \& Schmidt (1993)\cite{Meyer} and Pascual (1998)\cite{Pascual}.
The nonlinear stability of the relative equilibria or periodic perturbations
of the $N$-body problem with $N=n+1$ is also deserved to be researched, and one can try to
analyze the properties of higher orders or do some simulations by using a symplectic integrator.


\subsection*{Acknowledgment}
Many thanks to Prof. Alain Albouy in IMCCE, for his great help on this paper.
Thanks to the National Natural Science Foundation of China under grant No.10833001.
I also want to express my gratitude to the support of the Joint Doctoral Promotion program between CNRS and CAS.
\end{document}